\newcommand{\pl}[1]{\foreignlanguage{polish}{#1}}
\newcommand{\fr}[1]{\foreignlanguage{french}{#1}}
\newcommand{\CC}{\mathbb{C}}
\newcommand{\DD}{\mathbb{D}}
\newcommand{\NN}{\mathbb{N}}
\newcommand{\QQ}{\mathbb{Q}}
\newcommand{\RR}{\mathbb{R}}
\newcommand{\ZZ}{\mathbb{Z}}
\newcommand{\calC}{\mathcal{C}}
\newcommand{\calL}{\mathcal{L}}
\newcommand{\calF}{\mathcal{F}}
\newcommand{\calO}{\mathcal{O}}
\newcommand{\calH}{\mathcal{H}}
\newcommand{\calP}{\mathcal{P}}
\newcommand{\scrX}{\mathscr{X}}
\newcommand{\scrC}{\mathscr{C}}
\newcommand{\scrA}{\mathscr{A}}
\newcommand{\mathfrakA}{\mathfrak{a}}
\newcommand{\bfc}{\mathbf{c}}
\newcommand{\St}{\operatorname{St}}
\newcommand{\Aut}{\operatorname{Aut}}
\newcommand{\GL}{\operatorname{GL}}
\newcommand{\Aff}{\operatorname{Aff}}
\newcommand{\height}{\operatorname{ht}}
\newcommand{\Id}{\operatorname{Id}}
\newcommand{\frakp}{\mathfrak{p}}
\newcommand{\vphi}{\varphi}
\renewcommand{\Re}{\operatorname{Re}}
\newcommand{\norm}[1]{\lvert {#1} \rvert}
\newcommand{\abs}[1]{\lvert {#1} \rvert}
\newcommand{\sprod}[2]{\langle {#1}, {#2} \rangle}
\newcommand{\Tr}{\operatorname{Tr}}
\newcommand{\co}{\operatorname{co}}
\newcommand{\cspan}{\operatorname{\mathbb{C}-span}}
\newcommand{\zspan}{\operatorname{\mathbb{Z}-span}}
\newcommand{\ind}[1]{{\mathds{1}_{{#1}}}}
\renewcommand{\atop}[2]{\stackrel{{#1}}{{#2}}}
\theoremstyle{plain}
\newcounter{thm}
\newtheorem{main_theorem}[thm]{Theorem}
\newtheorem{theorem}{Theorem}[section]
\newtheorem{lemma}[theorem]{Lemma}
\newtheorem{claim}[theorem]{Claim}
\newtheorem{remark}[theorem]{Remark}
\newtheorem*{theorem*}{Theorem}
\newcommand{\thistheoremname}{}
\newtheorem*{genericthm*}{\thistheoremname}
\newenvironment{namedthm*}[1]
{\renewcommand{\thistheoremname}{#1}%
\begin{genericthm*}}
{\end{genericthm*}}
\title[Brooks--Lindenstrauss kernels on arbitrary reduced buildings]
{Construction of Brooks--Lindenstrauss kernels on affine buildings of arbitrary reduced type, with applications}
\author{Jean-Philippe Anker}
\address{
	\fr{
	Jean-Philippe Anker\\
	Universit\'e d’Orl\'eans, Universit\'e de Tours, CNRS,
    IDP, UMR 7013,
    Orl\'eans, France
	}
}
\email{anker@univ-orleans.fr}
\author{Bertrand R\'emy}
\address{
	\fr{
	Bertrand R\'emy\\
	Unit\'e de Math\'ematiques Pures et Appliquees (UMR 5669),
	\'Ecole normale sup\'erieure de Lyon,
	site Monod,
	46 all\'ee d'Italie,
	69364 Lyon Cedex 07,
	France
	}
}
\email{bertrand.remy@ens-lyon.fr}
\author{Bartosz Trojan}
\address{
	\pl{
	Bartosz Trojan\\
    Wydzia\l{} Matematyki\\
	Politechnika Wroc\l{}awska\\
	Wyb. Wyspia\'{n}skiego 27\\
	50-370 Wroc\l{}aw\\
	Poland}
}
\email{bartosz.trojan@gmail.com}
\begin{document}
\selectlanguage{english}

\begin{abstract}
	This article deals with harmonic analysis on affine buildings. Its main goal is to construct suitable kernels associated to
	a discrete multitemporal wave equation on the latter spaces, the long-standing motivation being to contribute to progress
	in arithmetic quantum unique ergodicity (AQUE) on certain Riemannian manifolds. 
\end{abstract}

\maketitle

\section*{Introduction}

Quantum unique ergodicity (QUE) deals with certain limit measures on compact (or finite-volume) negatively curved manifolds 
or locally symmetric manifolds of non-compact type. More precisely, on a negatively curved compact manifold $M$ we consider 
a sequence $(\varphi_j : j\in\NN)$ of $L^2$-normalized eigenfunctions of the positive Laplacian $\Delta$ corresponding to 
eigenvalues $\lambda_j\to\infty$. In this context, Z.~Rudnick and P.~Sarnak \cite{RudnickSarnak} conjectured that the only
weak-$\star$ cluster value of the sequence of probability measures $|(\varphi_j)_{j}|^2{\rm dvol}_M$ is ${\rm dvol}_M$ itself. 
When $M$ is a locally symmetric space whose universal cover $\widetilde M=G/K$ is an irreducible Riemannian symmetric space of
non-compact type, the analogous conjecture has a more group-theoretic formulation since then the identity component
$G={\rm Isom}(\widetilde M)^\circ$ is a simple Lie group and $K$ is a maximal compact subgroup (they are all conjugated).
The conjectured statement on limit measures is the same but one makes the assumption that the functions 
$(\varphi_j : j \in \NN)$ are joint eigenfunctions of the algebra $\DD$ of all $G$-invariant differential operators on
$\widetilde M$ descended to $M=\Gamma\backslash\widetilde{M}$, where $\Gamma$ is the fundamental group of $M$.

Though it is possible to attack these conjectures from a purely analytic perspective (see \emph{e.g.} \cite{Anantharaman22}), in 
the locally symmetric case some specific choices of eigenfunctions $(\varphi_j : j \in \NN)$ can be made, relying on group- 
and representation-theoretic considerations. Indeed, if one assumes that $\Gamma$, the fundamental group of $M$, is an arithmetic
lattice in $G$, then an additional commutative algebra of difference operators becomes available. More precisely, since $G$ can
be seen as the Archimedean points of an algebraic group ${\bf G}$ over a number field $F$, there is a Hecke algebra
$\mathscr{H}$ associated with an adelic realization of the homogeneous space $\Gamma\backslash G$ whose elements commute with
$\DD$. Thus it makes sense to work with joint eigenfunctions $\varphi_j$ of both $\DD$ and $\mathscr{H}$, which is the context
of arithmetic quantum unique ergodicity (AQUE). These additional arithmetic invariance conditions may be considered as a strong 
restriction, but they are automatically fulfilled if the $\DD$-spectrum is multiplicity free, which is actually conjectured under
suitable assumptions. The most striking and complete results in AQUE are those obtained by E.~Lindenstrauss for compact
hyperbolic surfaces \cite{Lindenstrauss} together with the finite volume case proved by K.~Soundararajan \cite{Soundararajan},
but more and more cases became available in recent years. 

Still in the arithmetic situation, there is an intermediate context in which the additional arithmetic invariance assumption
only involves a single place $v$ of $F$. In this case, the corresponding subalgebra $\mathscr{H}_v$ of $\mathscr{H}$ is a
convolution algebra of functions on the non-Archimedean Lie group ${\bf G}(F_v)$ where $F_v$ denotes the completion of $F$
with respect to $v$. Equivalently, $\mathscr{H}_v$ can be seen as some commutative algebra $\mathscr{A}_0$ of averaging
operators acting on functions defined on the vertices of the Bruhat--Tits building attached to ${\bf G}(F_v)$. This roughly
explains why the main technique of the present paper is the purely geometric (building-theoretic) counterpart to harmonic
analysis of bi-$K_v$-invariant functions on ${\bf G}(F_v)$ where $K_v$ is a suitably chosen maximal compact subgroup of
${\bf G}(F_v)$, as developed by I.~Satake and I.G.~Macdonald \cite{macdo0}.

We need to introduce additional concepts in order to state our results whose main outcome is the construction of a wave
kernel on an affine building. Let us recall that an affine building \cite{TitsCorvallis79} is a simplicial complex covered
by subcomplexes called \emph{apartments} which are all isomorphic to a given Euclidean tiling. The (discrete) symmetry group
of the model tiling is called the \emph{Weyl group} of the building, and apartments in the building are requested to satisfy
the following incidence conditions: 
\begin{itemize} 
	\item[--] any two simplices must be contained in an apartment;
	\item[--] given any two apartments, there must be an isomorphism between them fixing their intersection. 
\end{itemize}
These axioms allow one to define a real-valued distance between arbitrary points making an affine building a non-positively
curved contractible space. They also lead to a vectorial distance between vertices (\emph{i.e.} $0$-simplices) of particular type
and called special vertices: A vertex is called \emph{special} if its stabilizer in the Weyl group (of any apartment containing
it) is the full linear part of the Weyl group; we denote by $V_s$ the set of special vertices. 

Given an affine building $\scrX$ a (purely geometric) harmonic analysis can be developed and leads to a Gelfand--Fourier 
transform with no group involved (see Subsection \ref{ss - analysis on buildings} and \cite{park2} for more details).
First of all, the geometry of an apartment embedded in $\scrX$ defines a root system $\Phi$ with Weyl group $W$.
Let us denote by $\mathfrak a$ the Euclidean space containing it; this space contains the co-weight lattice $P$ in which
the vectorial distance $\sigma : V_s \times V_s \to P^+$ takes its values, see Section \ref{ss - affine buildings} for details.
For the purpose of applications, we consider a commutative Banach algebra $\mathscr{A}_1$ densely containing $\mathscr{A}_0$
whose spectrum $\Sigma$ is contained in the complexification $\mathfrakA_\CC$ of $\mathfrakA$, so that the Gelfand--Fourier 
transform $\mathscr{F}$ maps $\mathscr{A}_1$ into the algebra $\mathcal{C}(\Sigma)$ of continuous functions on $\Sigma$,
which is defined as \eqref{eq:1:4}.
\begin{main_theorem}
	\label{th - kernel intro}
	Assume that the root system $\Phi$ of the affine building $\scrX$ is reduced.
	Set $\rho=\sum_{i=1}^r\lambda_i$ where $\big\{\lambda_1, \ldots, \lambda_r\big\}$ is the canonical co-weight basis in
	$\mathfrak a$. Let $z_0 = \zeta_0 + i\theta_0$ be an element of $\Sigma$. Then there exist $M(\zeta_0), N(\zeta_0)
	\in \NN$ such that for all integers $M \geqslant M(\zeta_0)$ and $N \geqslant N(\zeta_0)$, there is a function
	$k: V_s \times V_s \rightarrow \CC$ with the following properties:
	\begin{enumerate}[label=\rm (\roman*), start=1, ref=\roman*]
		\item
		\label{en:0:1}
		for all $x, y \in V_s$ the value of $k(x, y)$ depends only on the vectorial distance $\sigma(x, y)$; moreover, 
		$k(x, y)$ vanishes when
		\[
			\sigma(x, y)\not\preceq cM^{r+1}N^r\rho
		\]
		where $c>0$ is a constant depending only on the root system $\Phi$;
		\item
		\label{en:0:2}
		there are $C > 0$ and $c' > 0$ such that
		\[
			\sup_{x, y \in V_s} |k(x, y)| \leqslant C e^{-c' N};
		\]
		\item
		\label{en:0:3}
		for all $x \in V_s$ and $z \in \Sigma$, we have 
		\[
			\calF\!\big(k(x, \cdot )\big)(z) \geqslant -1 \quad \text{while} \quad \calF\!\big(k(x, \cdot )\big) (z_0) 
			\geqslant M.
		\]
	\end{enumerate}
\end{main_theorem}
Such kernels were first constructed by Sh.~Brooks and E.~Lindenstrauss in \cite{BrooksLindenstrauss2010, BrooksLindenstrauss2013,
BrooksLindenstrauss2014} for homogeneous trees and next by Z.~Shem-Tov in \cite{Shem-Tov2022} for affine buildings of type
$\tilde{A}_r$ with $r \geqslant 2$. Our construction works for any affine building of reduced type and our kernel differs from 
the one in \cite{Shem-Tov2022}.

Let us briefly comment on the proof of Theorem \ref{th - kernel intro}. To construct the kernel we seek for building blocks
satisfying \eqref{en:0:1} and \eqref{en:0:2}. For this purpose in the case of homogeneous trees, Sh.~Brooks and E.~Lindenstrauss
used a fundamental solution to a discrete wave equation. Notice that, on real hyperbolic spaces, fundamental solutions 
to the shifted wave equation are known to possess similar properties (see for instance \cite{Tataru2001}). Therefore we introduce
a discrete multitemporal wave equation on affine buildings which is the analog of the system studied by 
M.A.~Semenov-Tian-Shansky \cite{Semenov1976} on non-compact symmetric spaces. A systematic study of the system, including
 scattering theory, is the subject of the forthcoming paper \cite{AnkerAtAll2026}.
In this article, we consider a particular non-trivial solution 
to this equation and show that it satisfies finite propagation speed and uniform exponential space-time decay 
(see Theorem \ref{thm:2:3}). Inspired by Sh.~Brooks and E.~Lindenstrauss 
\cite{BrooksLindenstrauss2010, BrooksLindenstrauss2013, BrooksLindenstrauss2014}, we define a kernel satisfying 
Theorem \ref{th - kernel intro}\eqref{en:0:1}--\eqref{en:0:3}. The last property is proved by a careful study of its Gelfand--Fourier transform on
the spectrum $\Sigma$. Lastly let us note again that the statement of Theorem \ref{th - kernel intro} and its proof make
no use of any group action.

Let us explain now how the above result fits in a general strategy elaborated to prove some cases of AQUE. 
The general scheme of proof introduced by E.~Lindenstrauss \cite{Lindenstrauss} consists of three steps which are quite 
independent from one another (see \cite[Introduction]{SV19} for more details). 
\begin{itemize}
	\item[$\bullet$] {\bf Microlocal lift.}
	This step consists in lifting the problem from the locally symmetric space $M=\Gamma\backslash G/K$ to  a geometric bundle 
	over $M$ (the unit tangent bundle in rank one) and eventually to the homogeneous space $\Gamma \backslash G$. 
	However, even if initially it was proved geometrically, there is now a representation-theoretic approach which works in a 
	fairly general setup \cite{SV07}. The outcome is a reformulation of the problem where the main objects of study are limits
	$\mu=\lim_{j\to\infty}|\varphi_j|^2 {\rm dvol_{\Gamma \backslash G}}$, where ${\rm dvol}_{\Gamma \backslash G}$ is the 
	normalized Haar measure on $\Gamma\backslash G$,
	and where $\varphi_j$ are $L^2$-normalized eigenfunctions of both $\mathscr{Z}(\mathfrak{g})$ and $\mathscr{H}$
	(or part of it).  Here $\mathfrak{g}$ denotes the Lie algebra of $G$, and $\mathscr{Z}(\mathfrak{g})$ the center of its
	universal enveloping algebra.
\item[$\bullet$] {\bf Positive entropy.} 
	A consequence of the previous step is that any cluster value $\mu$ of $( |\varphi_j|^2 {\rm d}x : j \in \NN)$ is 
	\emph{invariant} under a maximal split torus of $G$, denoted by $A$ in the sequel. In this step, the goal is to prove that 
	for all regular elements $a \in A$ (or at least for some specific ones), almost every ergodic component of a limit measure 
	$\mu$ has positive entropy. This is the place where our construction may play a role. 
\item[$\bullet$] {\bf Classification of homogeneous measures.}
	The next step consists in exploiting results from a wide program which is interesting in itself, namely the classification of
	invariant and ergodic probability measures on homogeneous spaces such as $\Gamma\backslash G$. This program is far from being
	completed and it is usually the main limiting step in proving instances of AQUE. Some intermediate results often state that
	$A$-invariant measures satisfying suitable positivity entropy conditions are measures associated to homogeneous spaces of 
	algebraic subgroups of $G$: the latter measures are usually called \emph{algebraic}. 
\end{itemize}
In this general scheme, the use of a kernel as in Theorem \ref{th - kernel intro} occurs in the second step, following ideas 
initially due to Sh.~Brooks and E.~Lindenstrauss \cite{BrooksLindenstrauss2010,BrooksLindenstrauss2013,BrooksLindenstrauss2014}
and enables one to deal with suitable "explicit" partitions of $\Gamma\backslash G$. This leads to the following positive entropy
result whose proof consists of checking that the lemmata used by Z.~Shem-Tov in \cite[Theorem 1.1]{Shem-Tov2022} to deal with
the case $G = {\rm SL}_n(\RR)$ can be used with minor modifications once his specific kernel for ${\rm SL}_n$ is replaced by ours. 

Let us consider the following setup for next theorem (see Subsection \ref{ss - potential applications} for more details): 
Let ${\bf G}$ be an absolutely almost simple group over $\QQ$ and let $p$ be a prime number. We set $G_\infty = {\bf G}(\RR)$
and $G_p = {\bf G}(\QQ_p)$, we choose a (special) maximal compact subgroup $K_p$ in $G_p$. Let us denote by $\Gamma[\smash{\frac1p}]$ 
(resp.~by $\Gamma$) the $\ZZ[\smash{\frac1p}]$-points (resp.~the $\ZZ$-points) of ${\bf G}$, for instance defined by means of
some embedding ${\bf G}<{\rm GL}_m$. In good cases (\emph{e.g.} when ${\bf G}$ is simply connected), we have
$G_p=\Gamma[\smash{\frac1p}] K_p$ and $\Gamma[\smash{\frac1p}]\cap K_p=\Gamma$, hence a bijection 
$\Gamma g_\infty\mapsto\Gamma[\smash{\frac1p}](g_\infty, 1)K_p$ between $\Gamma\backslash G_\infty$ and 
$\Gamma[\frac1p]\backslash(G_\infty\times G_p)/K_p$. As recalled in Subsection \ref{ss - general setting}, since we are 
eventually interested in $G_\infty$ only, we can adjust the choice of ${\bf G}$ so that $G_p$ is split over $\QQ_p$, hence 
the Bruhat--Tits building of $G_p$ has a reduced root system and thus admits kernels as in Theorem \ref{th - kernel intro}.
This will lead to a non-compact but finite volume quotient space $\Gamma \backslash G_\infty$. To deal with compact spaces 
$\Gamma\backslash G_\infty$ one has to assume that ${\bf G}$ is anisotropic over $\QQ$. This can be achieved either by choosing 
${\bf G}$ so that $G_{p'}$ is compact at some other non-Archimedean place $p'$ (implying strong restrictions on the absolute 
type of ${\bf G}$) or by assuming that the rational points of ${\bf G}$ at some Archimedean place are compact (with no
restriction on the absolute root system but requiring to pass from $\QQ$ to a number field $F$ for the ground field of
${\bf G}$). For simplicity we write $G$ in place of $G_\infty$.

\begin{main_theorem}
	\label{th - positive entropy intro}
	Let $G$ and $\Gamma$ be as in the paragraph above. Assume that $G$ is $\RR$-split. 
	Let $\mu$ be an $A$-invariant probability measure on the homogeneous space $X = \Gamma \backslash G$. Assume that
	$\mu=c \lim_{j\to\infty}|\varphi_j|^2 \: {\rm dvol_{\Gamma \backslash G}}$ is a weak-$\star$ limit where $c>0$ and 
	$\varphi_j$ are some $L^2$-normalized joint eigenfunctions of both $\mathscr{Z}(\mathfrak{g})$ and of the Hecke algebra 
	at some fixed prime $p$. Then any regular element $a\in A$ acts with positive entropy on each ergodic component of $\mu$.
\end{main_theorem}

Once this positive entropy result is proved, it can then be combined with classifications of $A$-invariant measures as alluded 
to above. 
Note that $A$-invariance of the limit measure is used here because the positive entropy criterion from Sect. \ref{subsection - pf ThB} involves a measure invariant under the action of a regular element of a maximal split torus. 
The $A$-invariance itself is implied for instance by a mild non-degenaracy assumption made on the sequence of eigenfunctions (see \cite[Theorem 1.6(3)]{SV07}).
Therefore, Theorem \ref{th - positive entropy intro} may contribute to provide some additional cases of AQUE involving 
a single place.

The structure of the paper goes as follows: In Section \ref{s - affine buildings}, we introduce the basics about affine buildings
and harmonic analysis thereon (again, we do it without calling on Bruhat--Tits theory, in fact without using any group action).
In Section \ref{s - multitemporal wave equation}, we define a discrete multitemporal wave equation and use Fourier analysis to 
study a particular solution for which we prove finite propagation speed and uniform space-time estimates. In Section 
\ref{s - kernel construction}, we give a detailed proof of Theorem \ref{th - kernel intro}. In Section \ref{s - applications}, 
we first describe the general context in which the constructed kernels could be used and then prove Theorem \ref{th - positive entropy intro}.

We conclude the introduction with some notation. In the whole paper, $\NN$ (resp.~$\NN^*$) denotes the set of integers $\geqslant 0$
(resp.~$>0$).

\section*{Acknowledgement}
J.-Ph. Anker and B. Trojan thank \'Ecole normale sup\'erieure de Lyon for invitations; B. R\'emy and B. Trojan thank Universit\'e
d'Orl\'eans for the same reason. B. Trojan acknowledges financial support from CNRS for a research trimester in 2023 in 
Orl\'eans.

\section{Affine buildings}
\label{s - affine buildings} 

\subsection{Buildings}
\label{ss - buildings} 
A family $\scrX$ of non-empty finite subsets of some set $V$ is an \emph{abstract simplicial complex}
if for all $\sigma \in \scrX$, each subset $\gamma \subseteq \sigma$ also belongs to $\scrX$. The elements of $\scrX$
are called \emph{simplices}. The dimension of a simplex $\sigma$ is $|\sigma| - 1$. Zero dimensional simplices are called
\emph{vertices}. The set $V(\scrX) = \bigcup_{\sigma \in \scrX} \sigma$ is the \emph{vertex set} of $\scrX$.
The dimension of the complex $\scrX$ is the maximal dimension of its simplices. A \emph{face} of a simplex $\sigma$
is a non-empty subset $\gamma \subseteq \sigma$. For a simplex $\sigma$  we denote by $\St(\sigma)$ the collection of
simplices containing $\sigma$; in particular, $\St(\sigma)$ is a simplicial complex. Two abstract simplicial complexes
$\scrX$ and $\scrX'$ are \emph{isomorphic} if there is a bijection $\psi: V(\scrX) \rightarrow V(\scrX')$ such that for
all $\sigma = \{x_1, \ldots, x_k\} \in \scrX$ we have $\psi(\sigma) = \{\psi(x_1), \ldots, \psi(x_k)\} \in \scrX'$.
With every abstract simplicial complex $\scrX$ one can associate its \emph{geometric realization} $|\scrX|$ in the vector 
space of functions $V \rightarrow \RR$ with finite support, see \emph{e.g.} \cite[\S2]{Munkres1996}.

A set $\scrC$ equipped with a collection of equivalence relations $\{\sim_i : i \in I\}$ where $I = \{0, \ldots, r\}$,
is called a \emph{chamber system} and the elements of $\scrC$ are called \emph{chambers}. A \emph{gallery} of type
$f = i_1 \ldots i_k$ in $\scrC$ is a sequence of chambers $(c_1, \ldots, c_k)$ such that for all
$j \in \{1,2, \ldots k\}$, we have $c_{j-1} \sim_{i_j} c_j$, and $c_{j-1} \neq c_j$. If $J \subseteq I$, a \emph{residue} of 
type $J$ is a subset of $\scrC$ such that any two chambers can be joined by a gallery of type $f = i_1 \ldots i_k$ with
$i_1, \ldots, i_k \in J$. From a chamber system $\scrC$ we can construct an abstract simplicial complex where each
residue of type $J$ corresponds to a simplex of dimension $r - |J|$. Then, for a given vertex $x$, we denote by
$\calC(x)$ the set of chambers containing $x$. 

A \emph{Coxeter group} is a group $W$ given by a presentation
\[
	\left\langle
	r_i : (r_i r_j)^{m_{i, j}} = 1, \text{ for all } i, j \in I
	\right\rangle
\]
where $M = (m_{i,j})_{I \times I}$ is a symmetric matrix with entries in $\ZZ \cup \{\infty\}$ such that for all
$i, j \in I$,
\[
	m_{i,j}=
	\begin{cases}
		\geqslant 2 & \text{if }i\neq j,\\
		1 & \text{if }i=j.
	\end{cases}
\]
For a word $f=i_1 \cdots i_k$ in the free monoid $I$ we denote by $r_f$ an element of $W$ of the form
$r_f= r_{i_1} \cdots r_{i_k}$. The length of $w \in W$, denoted $\ell(w)$, is the smallest integer $k$ such that there
is a word $f=i_1 \cdots i_k$ and $w=r_f$. We say that $f$ is reduced if $\ell(r_f) = k$. A Coxeter group $W$ may be turned
into a chamber system by introducing in $W$ the following collection of equivalence relations:
$w \sim_i w'$ if and only if $w = w'$ or $w = w' r_i$. The corresponding simplicial complex $\Sigma$ is called 
\emph{Coxeter complex}. 

A simplicial complex $\scrX$ is called a \emph{building of type $\Sigma$} if it contains a family of subcomplexes called
\emph{apartments} such that
\begin{enumerate}[start=1, label=\rm (B\roman*), ref=B\roman*]
	\item\label{en:3:1}
	each apartment is isomorphic to $\Sigma$,
	\item\label{en:3:2}
	any two simplices of $\scrX$ lie in a common apartment,
	\item\label{en:3:3}
	for any two apartments $\scrA$ and $\scrA'$ having a chamber in common there is an 
	isomorphism $\psi: \scrA \rightarrow \scrA'$ fixing $\scrA \cap \scrA'$ pointwise.
\end{enumerate}
The rank of the building is the cardinality of the set $I$. We always assume that $\scrX$ is irreducible. A simplex $c$ is a 
chamber in $\scrX$ if it is a chamber in any of its apartments. By $C(\scrX)$ we denote the set of chambers in $\scrX$. Using
the building axioms we see that $C(\scrX)$ has a chamber system structure. However, it is not unique. A geometric realization
of the building $\scrX$ is its geometric realization as an abstract simplicial complex. In this article we assume that the 
system of apartments in $\scrX$ is \emph{complete} meaning that any subcomplex of $\scrX$ isomorphic to $\Sigma$ is an apartment.
We denote by $\Aut(\scrX)$ the group of automorphisms of the building $\scrX$.

\subsection{Affine Coxeter complexes}
\label{sec:1}
In this section we recall basic facts about root systems and Coxeter groups. A general reference is \cite{Bourbaki2002},
which deals with Coxeter systems attached to reduced root systems. Since at the beginning the root system may be non-reduced
we will also refer to \cite{mz1, park}.

Let $\Phi$ be an irreducible, but not necessarily reduced, finite root system in a Euclidean space $\mathfrakA$.
The $\zspan$ of $\Phi$ is the \emph{root lattice} $L$. Let $\{\alpha_1,\dots,\alpha_r\}$ be a base of $\Phi$, and let $\Phi^+$
denote the corresponding subset of \emph{positive roots}. We denote 
\[
	\mathfrakA_+=\bigl\{x\in\mathfrakA:\langle\alpha,x\rangle>0\text{ for every }\alpha\in\Phi^+\bigr\}
\]
the \emph{positive Weyl chamber}. 

For a positive root $\alpha \in \Phi^+$, its \emph{height} is defined as
\[
	\height(\alpha)=\sum_{i=1}^rn_i 
\]
wherever $\alpha=\sum_{i=1}^rn_i\alpha_i$ with $n_i\in\NN$. Since $\Phi$ is irreducible, there is a unique \emph{highest root}
$\alpha_0=\sum_{i=1}^rm_i\alpha_i$ with $m_i\in\NN$. We set
\[
	I_0=\{1,\dots,r\}
	\quad\text{and}\quad
	I_g=\{0\}\cup\{i\in I_0:m_i=1\}.
\]
Let $\Phi\spcheck=\{\alpha\spcheck =\frac2{\sprod{\alpha}{\alpha}} \alpha :\alpha\in\Phi\}$ be the dual root system.
The $\zspan$ of $\Phi\spcheck$ is the \emph{co-root lattice} $Q$. Let $Q^+=\sum_{\alpha\in\Phi^+}\NN\alpha\spcheck$.
The dual basis $\{\lambda_1,\dots,\lambda_r\}$ to $\{\alpha_1,\dots,\alpha_r\}$ consists of \emph{fundamental co-weights} and
its $\zspan$ is the \emph{co-weight lattice} $P$. Let $P^+=\sum_{i=1}^r\NN\lambda_i$ be the cone of \emph{dominant co-weights},
and let $\sum_{i=1}^r\NN^*\lambda_i$ be the subcone of \emph{strongly dominant co-weights}. For instance
\[
	\rho=\sum_{i=1}^r\lambda_i
\]
is a strongly dominant co-weight. Notice that
\[
	\rho=\frac12\sum_{\alpha\in{\Phi^{++}}}\alpha\spcheck
\]
where $\Phi^{++}$ denotes the set of indivisible positive roots in $\Phi$, see \cite[Proposition 10.29, \S 1, VI]{Bourbaki2002}

Let $\calH$ be the family of affine hyperplanes, called \emph{walls}, being of the form
\[
	H_{\alpha; k} = \big\{x \in \mathfrakA : \langle x, \alpha \rangle = k \big\}
\]
where $\alpha \in \Phi^+$ and $k \in \ZZ$. Each wall determines two half-apartments
\[
	H^-_{\alpha; k} = \big\{x \in \mathfrakA : \langle x, \alpha \rangle \leqslant k\big\}
	\quad\text{and}\quad
	H^+_{\alpha; k} = \big\{x \in \mathfrakA : \langle x, \alpha \rangle \geqslant k\big\}.
\]
Note that for a given $\alpha$, the family $H^-_{\alpha; k}$ is increasing in $k$ while the family $H^+_{\alpha; k}$ is
decreasing. To each wall we associate $r_{\alpha; k}$ the orthogonal reflection in $\mathfrakA$ defined by 
\[
	r_{\alpha; k}(x) = x - \big(\sprod{x}{\alpha} - k\big)\alpha\spcheck.
\]
Set $r_0 = r_{\alpha_0; 1}$, and $r_i = r_{\alpha_i; 0}$ for each $i \in I_0$. 

The \emph{finite Weyl group} $W$ is the subgroup of $\GL(\mathfrakA)$ generated by $\{r_i: i \in I_0\}$. Let us denote
by $w_0$ the longest element in $W$. The \emph{fundamental sector} in $\mathfrakA$ defined as
\begin{align*}
	S_0 = \overline{\mathfrakA_+}
	&= \bigoplus_{i \in I_0} \RR_+ \lambda_i \\
	&= \bigcap_{i \in I_0} H^+_{\alpha_i; 0}
\end{align*}
is the fundamental domain for the action of $W$ on $\mathfrakA$.

The \emph{affine Weyl group} $W^a$ is the subgroup of $\Aff(\mathfrakA)$ generated by $\{r_i: i \in I\}$. Observe that 
$W^a$ is a Coxeter group. The hyperplanes $\calH$ give the geometric realization of its Coxeter complex $\Sigma_\Phi$. To see
this, let $C(\Sigma_\Phi)$ be the family of closures of the connected components of
$\mathfrakA \setminus \bigcup_{H \in \calH} H$. By $C_0$ we denote the \emph{fundamental chamber} 
(or \emph{fundamental alcove}), \emph{i.e.}
\begin{align*}
	C_0
	&=\bigl\{x\in\mathfrakA:\sprod{x}{\alpha_0}\leqslant1\text{ and }\sprod{x}{\alpha_i}\geqslant0\text{ for all }
	i \in I_0\big\} \\
	&=\biggl(\bigcap_{i \in I_0}H^+_{\alpha_i;0}\biggr)\cap H^-_{\alpha_0;1}
\end{align*}
which is the fundamental domain for the action of $W^a$ on $\mathfrakA$. Moreover, the group $W^a$ acts simply transitively 
on $C(\Sigma_\Phi)$. This allows us to introduce a chamber system in $C(\Sigma_\Phi)$: For two chambers $C$ and $C'$ and
$i \in I$, we set $C \sim_i C'$ if and only if $C = C'$ or there is $w \in W^a$ such that $C = w . C_0$ and
$C' = w r_i . C_0$. 

The vertices of $C_0$ are $\{0, \lambda_1/m_1, \ldots, \lambda_r/m_r\}$. Let us denote the set of vertices of all 
$C \in C(\Sigma_\Phi)$ by $V(\Sigma_\Phi)$. Under the action of $W^a$, the set $V(\Sigma_\Phi)$ is made up of $r+1$
orbits $W^a.0$ and $W^a.(\lambda_i/m_i)$ for all $i \in I_0$. Thus setting $\tau_{\Sigma_\Phi}(0) = 0$, and
$\tau_{\Sigma_\Phi}(\lambda_i/m_i) = i$ for $i \in I_0$, we obtain the unique labeling
$\tau_{\Sigma_\Phi} : V(\Sigma_\Phi) \rightarrow I$ such that any chamber $C \in C(\Sigma_\Phi)$ has one vertex with each label. 

For each simplicial automorphism $\vphi: \Sigma_\Phi \rightarrow \Sigma_\Phi$ there is a permutation $\pi$ of the set $I$
such that for all chambers $C$ and $C'$, we have $C \sim_i C'$ if and only if $\vphi(C) \sim_{\pi(i)} \vphi(C')$, and 
\[
	\tau_{\Sigma_\Phi}(\vphi(v)) = \pi(\tau_{\Sigma_\Phi}(v))
	\quad\text{ for all } v \in V(\Sigma_\Phi).
\]
A vertex $v$ is called \emph{special} if for each $\alpha \in \Phi^+$ there is $k$ such that $v$ belongs to $H_{\alpha; k}$.
The set of all special vertices is denoted by $V_s(\Sigma_\Phi)$. 

Given $\lambda \in P$ and $w \in W^a$, the set $S = \lambda + w . S_0$ is called a \emph{sector} in $\Sigma_\Phi$ with a
\emph{base vertex} $\lambda$.

Moreover, by \cite[Corollary 3.20]{Abramenko2008}, an affine Coxeter complex $\Sigma_\Phi$ uniquely determines the affine Weyl
group $W^a$ but not a finite root system $\Phi$. In fact, the root systems $\text{C}_r$ and $\text{BC}_r$ have the same
affine Weyl group.

\subsection{Affine buildings}
\label{ss - affine buildings} 
A building $\scrX$ of type $\Sigma$ is called an \emph{affine building} if $\Sigma$ is a Coxeter complex corresponding
to an affine Weyl group. Select a chamber $c_0$ in $C(\scrX)$ and an apartment $\scrA_0$ containing $c_0$. Using
an isomorphism $\psi_0: \scrA_0 \rightarrow \Sigma$ such that $\psi_0(c_0) = C_0$, we define the labeling in $\scrA_0$ by
\[
	\tau_{\scrA_0}(v) = \tau_\Sigma(\psi_0(v)) \quad \text{ for all } v \in V(\scrA_0).
\]
Now, thanks to the building axioms the labeling can be uniquely extended to $\tau: V(\scrX) \rightarrow I$. To turn 
$C(\scrX)$ into a chamber system over $I$ we declare that two chambers $c$ and $c'$ are $i$-adjacent if they share all
vertices except the one of type $i$ (equivalently, they intersect along an $i$-panel). For each $c \in C(\scrX)$ and $i \in I$,
we define
\[
	q_i(c) = \big|\big\{c' \in C(\scrX) : c' \sim_i c \big\}\big| - 1.
\]
In all the paper, we \emph{assume} that $q_i(c)$ only depends on $i$, \emph{i.e.} that $q_i(c)$ is independent of $c$, and therefore the building $\scrX$ is \emph{regular}; we henceforth write $q_i$ instead of $q_i(c)$. 
We also assume that $1 < q_i(c) < \infty$ and therefore the building $\scrX$ is \emph{thick} and \emph{locally finite}. 
A vertex of $\scrX$ is special if it is special in any of its apartments. The set of special vertices is denoted by $V_s$. We
choose the finite root system $\Phi$ in such a way that $\Sigma$ is its Coxeter complex. In all cases except when the
affine group has type $\text{C}_r$ or $\text{BC}_r$, the choice is unique. In the remaining cases we select $\text{C}_r$
if $q_0 = q_r$, otherwise we take $\text{BC}_r$. This guarantees that $q_{\tau(\lambda)} = q_{\tau(\lambda+\lambda')}$
for all $\lambda, \lambda' \in P$, see the discussion in \cite[Section 2.13]{mz1}.
In this article all buildings have reduced type. 

Given two special vertices $x, y \in V_s$, let $\scrA$ be an apartment containing $x$ and $y$ and let $\psi:
\scrA \rightarrow \Sigma$ be a type-rotating isomorphism such that $\psi(x) = 0$ and $\psi(y) \in S_0$, see
\cite[Definition 4.1.1]{park}. We set $\sigma(x, y) = \psi(y) \in P^+$. For $\lambda \in P^+$ and $x \in V_s$, we denote by 
$V_\lambda(x)$ the set of all special vertices $y \in V_s$ such that $\sigma(x, y) = \lambda$. The building axioms entail 
that the cardinality of $V_\lambda(x)$ depends only on $\lambda$, see \cite[Proposition 1.5]{park2}. Let $N_\lambda$ be the 
common value. 

We fix once and for all an origin $o$ which is a special vertex of  the chamber $c_0$.
Let us define a multiplicative function on $\mathfrakA$ by
\[
	\chi_0(\lambda) = \prod_{\alpha \in \Phi^+} q_\alpha^{\sprod{\lambda}{\alpha}}
	\quad \text{ with } \lambda \in \mathfrakA
\]
where $q_\alpha = q_i$ whenever $\alpha \in W\!.\alpha_i$ for $i \in I_0$.
For $w \in W^a$ having the reduced expression $w = r_{i_1} \cdots r_{i_k}$, we set $q_{w} = q_{i_1} \cdots q_{i_k}$. Then
\begin{equation}
	\label{eq:1:1}
	N_\lambda=\frac{W(q^{-1})}{W_\lambda(q^{-1})}\chi_0(\lambda)
\end{equation}
where $W_\lambda=\bigl\{w\in W:w .\lambda=\lambda\bigr\}$, and for any subset $U\subseteq W$ we have set 
\[
	U(q^{-1})=\sum_{w\in U}q_w^{-1}.
\]

\subsection{Spherical harmonic analysis}
\label{ss - analysis on buildings}

In this subsection we summarize spherical harmonic analysis on affine buildings (see \cite{macdo0, park2}).

We consider the averaging operators
\[
	A_\lambda f(x)=\frac1{N_\lambda}\sum_{y\in V_\lambda(x)}f(y) \quad \text{ for all } \lambda\in P^+
\]
acting on functions $f:V_s\to\CC$. Let $\mathscr{A}_0=\cspan\{A_\lambda:\lambda\in P^+\}$ be a commutative unital involutive
algebra (\footnote{The involution is induced by $A_\lambda^\star=A_{\lambda^\star}$ where $\lambda^\star=-w_0.\lambda$.})
whose characters can be expressed in terms of Macdonald functions (see \cite{park2})
\begin{equation}
	\label{eq:1:2}
	P_\lambda(z)=\frac{\chi_0^{-\frac12}(\lambda)}{W(q^{-1})}\sum_{w\in W}\bfc(w.z) e^{\sprod{w.z}{\lambda}}
	\quad \text{ with } \lambda\in P^+ \text{ and } z \in \mathfrak{a}_\CC
\end{equation}
where
\[
	\bfc(z)=\prod_{\alpha\in\Phi^+}\frac{1-q_\alpha^{-1}e^{-\sprod{z}{\alpha\spcheck}}}{1-e^{-\sprod{z}{\alpha\spcheck}}}
\]
is the so-called $\bfc$-function; the values of $P_\lambda$ where the denominator of the $\bfc$-function vanishes 
can be obtained by taking appropriate limits. Specifically, every multiplicative functional on $\mathscr{A}_0$ is given by 
the evaluation
\[
	h_z(A_\lambda)=P_\lambda(z)
	\quad \text{ for all } \lambda\in P^+,
\]
at certain $z\in\mathfrakA_\mathbb{C}$. Moreover, $h_z=h_{z'}$ if and only if $W\!.z+i2\pi L=W\!.z'+i2\pi L$
(see \cite[Theorem 3.3.12(ii)]{macdo0}).

Recall that Macdonald functions as given by the formula \eqref{eq:1:2} are linear combinations
\begin{equation}
	\label{eq:1:5}
	P_\lambda=\sum_{\mu\preceq\lambda}c_{\lambda,\mu} m_{\mu}
\end{equation}
(with nonnegative coefficients) of monomial symmetric functions
\[
	m_\mu=\sum_{\nu\in W\!.\mu}e^{\nu} \quad \text{ for all } \mu\in P^+.
\]
Conversely, every $m_\lambda$ is a linear combination of $P_\mu$'s with $\mu\preceq\lambda$.

Let $\mathscr{A}_2$ be the closure of $\mathscr{A}_0$ in the operator norm on $\ell^2(V_s)$. Then $\mathscr{A}_2$ is a
commutative unital $C^\star$-algebra whose spectrum $\widehat{\mathscr{A}_2}$ equals $i\mathfrakA$ modulo
$W\ltimes i2\pi L$ (\footnote{Specifically, $h_z$ extends to a bounded character on $\mathscr{A}_2$ if and only if
$z\in i\mathfrakA$.}), and the Gelfand--Fourier transform
\[
	\calF\!A(\theta)=h_{i\theta}(A)\quad \text{ with } \theta\in\mathfrakA
\]
identifies $\mathscr{A}_2$ with $\mathcal{C}(\widehat{\mathscr{A}_2})$ the $C^\star$-algebra of (bounded) continuous functions
on $\widehat{\mathscr{A}_2}$. Finally, the following inversion formula holds: for every $A\in\mathscr{A}_0$ and $x,y\in V_s$;
\begin{equation}
	\label{eq:1:3}
	(A\delta_x)(y)=\bigg(\frac1{2\pi}\bigg)^r\frac{W(q^{-1})}{|W|}
	\int_{\mathscr{D}}\calF\!A(\theta) \overline{P_\lambda(i\theta)} \frac{{\rm d} \theta}{|\bfc(i\theta)|^2}
\end{equation}
where $\delta_x$ denotes the Dirac measure at $x$, $\lambda=\sigma(x,y)$ and
\begin{equation}
	\label{FundamentalDomainD}
	\mathscr{D}=\bigl\{\theta\in\mathfrakA:\sprod{\theta}{\alpha\spcheck}\leqslant2\pi\text{ for all }\alpha\in\Phi\bigr\},
\end{equation}
is a fundamental domain for the lattice $2\pi L$ in $\mathfrakA$, see \cite{park2}.

Actually we need another completion of $\mathscr{A}_0$: As in \cite[Section 6]{CartwrightWoess2004}, we consider the following
commutative unital involutive Banach algebra:
\[
	\mathscr{A}_1=\Bigl\{A=\sum\nolimits_{\lambda\in P^+}c_\lambda A_\lambda:\|A\|_1=\sum\nolimits_{\lambda\in P^+}|c_\lambda|<
	\infty\Bigr\}.
\]
Notice that on the one hand the character $h_z$ extends continuously to $\mathscr{A}_1$ if and only if
$\sup_{\lambda\in P^+}|P_\lambda(z)|<\infty$; according to \cite[Theorem 4.7.1]{macdo0} this happens if and only if $\Re z$
belongs to the convex hull of $W\!.\log\chi_0^{\frac12}$. On the other hand $h_z$ is Hermitian if and only if
$P_{\lambda^\star}(z)=\overline{P_\lambda(z)}$ for every $\lambda\in P^+$; as $P_{\lambda^\star}(z)=P_\lambda(-z)$ and
$\overline{P_\lambda(z)}=P_\lambda(\overline{z})$, we deduce that $-\overline{z}\in W\!.z+i2\pi L$. In summary, the continuous
Hermitian characters of $\mathscr{A}_1$ are parametrized by the set
\begin{equation}
	\label{eq:1:4}
	\Sigma=\bigl\{z\in\co(W\!.\log\chi_0^{\frac12})+i\mathfrakA:-\overline{z}\in W\!.z+i2\pi L\bigr\}
\end{equation}
modulo $W\ltimes i2\pi L$ (\footnote{Notice that we may therefore restrict to
$z\in\co(W\!.\log\chi_0^{\frac12})\cap\overline{\mathfrakA_+}+i\mathscr{D}$.}), which we call the \emph{spectrum} of
$\mathscr{X}$. Finally, the Gelfand--Fourier transform extends to a continuous homomorphism from $\mathscr{A}_1$ into
$\mathcal{C}(\Sigma)$.

\section{Multitemporal wave equation on affine buildings}
\label{s - multitemporal wave equation}

In \cite{Semenov1976} Semenov-Tian-Shansky introduced the following 
multitemporal wave equation on Riemannian symmetric spaces $X=G/K$ of non-compact type:
\begin{equation}
	\label{MultitemporalWaveEquationSymmetricSpace}
	\tilde{D}\bigl(\tfrac\partial{\partial t}\bigr)u(t,x)=D_xu(t,x)
	\quad\text{ for all }D\in\mathbb{D}, t \in \mathfrakA\text{ and }x\in X,
\end{equation}
where $\tilde{D}$ is the image of $D$ under the Harish-Chandra isomorphism between the algebra $\mathbb{D}$ of $G$-invariant 
differential operators on $X$, and the algebra $\mathscr{P}(\mathfrakA)^W$ of $W$-invariant polynomials on the Cartan subspace 
$\mathfrakA$. 
This equation was further studied in \cite{PhillipsShahshahani1993}, \cite{Helgason1998},
\cite[Ch.~V, \S\S~5.7--5.11]{Helgason2008}; see  the latter reference for more details.

In this section we introduce a discrete analog of \eqref{MultitemporalWaveEquationSymmetricSpace} on affine buildings $\scrX$
of reduced type, and produce a particular non-trivial solution enjoying two basic properties: finite propagation speed and
uniform space-time exponential decay. Specifically, the algebra $\mathbb{D}$ of invariant differential operators on $X$ is
replaced by the algebra $\mathscr{A}_0$ of averaging operators on $\mathscr{X}$ and the Harish-Chandra isomorphism $\Gamma$
by the isomorphism induced by $A_\lambda\mapsto P_\lambda$ ($\lambda\in P^+$) between $\mathscr{A}_0$ and the algebra of
$W$-invariant trigonometric polynomials on $i\mathfrakA$. Thus \eqref{MultitemporalWaveEquationSymmetricSpace} becomes 
\begin{equation}
	\label{eq:2:7}
	\sum_{\atop{\mu\in P^+}{\mu\preceq\lambda}}c_{\lambda,\mu}
	\sum_{\omega\in W\!.\mu}u(\nu+\omega,x)=A_\lambda u(\nu,x)
	\quad\text{ for all }\lambda,\nu\in P^+\text{ and }x\in V_s
\end{equation}
where the coefficients $c_{\lambda,\mu}$ are defined by \eqref{eq:1:5}.

Next, let us consider the function $u:P^+\times V_s\rightarrow\CC$ whose partial Gelfand--Fourier transform with respect to
the second variable is given by
\[
	u(\nu,\hat{\theta})=\sum_{w\in W}e^{i\langle w.\theta,\nu\rangle}=|W_\nu| m_\nu(i\theta)
	\quad\text{ for all }\nu\in P^+\text{ and }\theta\in\mathfrakA,
\]
where $W_\nu = \{w \in W : w.\nu = \nu \}$. 
In particular, $u(\nu, \cdot)$ belongs to $\mathscr{A}_0$. We claim that $u$ solves \eqref{eq:2:7} which writes
\begin{equation}
	\label{eq:2:8}
	\sum_{\mu\preceq\lambda}c_{\lambda,\mu}\sum_{\omega\in W\!.\mu}u(\nu+\omega,\hat{\theta})
	=P_\lambda(i\theta) u(\nu,\hat{\theta})
	\quad\text{ for all }\nu\in P^+\text{ and }\theta\in\mathfrakA,
\end{equation}
after taking the partial Gelfand--Fourier transform. Indeed,
\begin{align*}
	\sum_{\mu \preceq \lambda} c_{\lambda, \mu} \sum_{\omega \in W\!. \mu} u(\nu + \omega, \hat{\theta} )
	&=
	\sum_{\mu \preceq \lambda} c_{\lambda, \mu} \sum_{\omega \in W\!. \mu} \sum_{w \in W} 
	e^{i\sprod{w . \theta}{\nu + \omega}} \\
	&=
	\sum_{\mu \preceq \lambda} c_{\lambda, \mu} \sum_{w \in W} 
	e^{i\sprod{w.\theta}{\nu}}\sum_{\omega\in W\!.\mu}e^{i\sprod{w.\theta}{\omega}} \\
	&=
	\sum_{\mu \preceq \lambda} c_{\lambda, \mu}m_{\mu}(i \theta) \sum_{w \in W} e^{i \sprod{w . \theta}{\nu}} \\
	&=
	P_\lambda(i\theta)u(\nu, \hat{\theta}).
\end{align*}
If $\sigma(o, x) = \omega$, then by the inversion formula \eqref{eq:1:3} we obtain
\begin{align*}
	u(\mu,x)
	&=\bigg(\frac1{2\pi}\bigg)^r \frac{W(q^{-1})}{\abs{W}}
	\int_{\mathscr{D}}u(\mu,\hat{\theta}) \overline{P_\omega(i\theta)} \: \frac{{\rm d}\theta}{\abs{\bfc(i\theta)}^2} \\
	&=\bigg(\frac1{2\pi}\bigg)^r \frac{W(q^{-1})}{\abs{W\!. \mu}}
	\int_{\mathscr{D}}m_{\mu}(i\theta) \overline{P_\omega(i\theta)} \: \frac{{\rm d}\theta}{\abs{\bfc(i\theta)}^2}.
\end{align*}
Now, using the explicit formula for Macdonald functions \eqref{eq:1:2}, we get
\begin{align*}
	u(\mu, x)
	&=\chi_0(\omega)^{-\frac12} \frac{1}{\abs{W\!.\mu}}\sum_{w \in W}\bigg(\frac1{2\pi}\bigg)^r
	\int_{\mathscr{D}}m_{\mu}(i\theta) e^{-i\sprod{w.\theta}{\omega}} \: \frac{{\rm d}\theta}{\bfc(iw.\theta)}\\
	&=\chi_0(\omega)^{-\frac12} \underbrace{\color{black}\frac{\abs{W}}{\abs{W\!.\mu}}}_{|W_\mu|} \bigg(\frac1{2\pi}\bigg)^r
	\int_{\mathscr{D}}m_{\mu}(i \theta) e^{-i\sprod{\omega}{\theta}} \: \frac{{\rm d}\theta}{\bfc(i\theta)}.
\end{align*}
The last integral vanishes if  $\omega \not \preceq \mu$; in other words, $u(\mu, \cdot)$ is supported in
\[
	\big\{x \in V_s : \sigma(o, x) \preceq \mu \big\}.
\]
This property is a higher rank analog of finite propagation speed for the wave equation on homogeneous trees
(see \cite[Section 4]{Anker2012} or \cite[Lemma 2]{BrooksLindenstrauss2010}).

Next, we write
\[
	\frac{1-e^{-\alpha\spcheck} }{1 - q_{\alpha}^{-1} e^{-\alpha\spcheck}} = 
	1 + (1-q_{\alpha}) \sum_{j_\alpha = 1}^\infty q_{\alpha}^{-j_\alpha} 
	e^{-j_\alpha \alpha\spcheck},
\]
thus
\[
	\frac{1}{\bfc} = \sum_{\nu \in Q^+} C(\nu) e^{-\nu}
\]
where for $\nu \in Q^+$ we have set
\[
	C(\nu) = \sum_{
	(j_\alpha : \alpha \in \Phi^+) \in \calP(\nu)}
	\prod_{\stackrel{\alpha \in \Phi^+}{j_\alpha > 0}} (1-q_\alpha) q_\alpha^{-j_\alpha}
\]
with
\[
	\calP(\nu) = 
	\Big\{
	(j_\alpha : \alpha \in \Phi^+) \in {\NN^{|\Phi^+|}}: \sum_{\alpha \in \Phi^+} j_\alpha \alpha\spcheck = \nu
	\Big\}.
\]
Therefore,
\begin{align*}
	u(\mu, x)
	&=\chi_0(\omega)^{-\frac12}\frac{\abs{W}}{\abs{W\!.\mu}} \bigg(\frac1{2\pi}\bigg)^r
	\sum_{\nu\in Q^+}C(\nu)\int_{\mathscr{D}}m_{\mu}(i\theta) e^{-i\sprod{\omega+\nu}{\theta}}{\: \rm d}\theta
	\nonumber\\
	&=\chi_0(\omega)^{-\frac12}\frac{\abs{W}}{\abs{W\!.\mu}}
	\sum_{\nu\in Q^+}\frac1{\abs{W\!.(\omega+\nu)}}C(\nu)\sprod{m_{\mu}}{m_{\omega+\nu}}.
\end{align*}
Since
\[
	\sprod{m_{\mu}}{m_{\omega+\nu}} = 
	\begin{cases}
		|W.\mu| & \text{if } \omega + \nu \in W . \mu \\
		0 & \text{otherwise,}
	\end{cases}
\]
we obtain
\begin{equation}
	u(\mu, x) = \chi_0(\omega)^{-\frac12}\frac{\abs{W}}{\abs{W\!.\mu}} 
	\sum_{\atop{w \in W}{\omega\preceq w.\mu}} C(w.\mu-\omega). 
	\label{eq:2:9}
\end{equation}
Our next aim is to estimate 
\[
	\chi_0(\omega)^{-\frac12}\sum_{\atop{w\in W}{\omega\preceq w.\mu}}|C(w.\mu-\omega)|.
\]
Let us recall that the condition $w.\mu\succeq\omega$ implies that $w.\mu-\omega$ belongs to the cone $Q^+$ generated by the 
positive co-roots.

\begin{lemma}
	\label{lem:2:1}
	There are $C > 0$ and $1 < A < \min\{q_\alpha : \alpha \in \Phi^+\}$, such that for each $\nu \in Q^+$,
	\[
		\abs{C(\nu)} \leqslant C A^{-\sprod{\nu}{\tilde{\rho}}}
	\]
	where 
	\[
		\tilde{\rho} = \frac12 \sum_{\alpha \in \Phi^{+}} \alpha.
	\]
\end{lemma}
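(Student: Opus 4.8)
The claim is a geometric decay estimate on the coefficients $C(\nu)$ arising in the power-series expansion $1/\bfc = \sum_{\nu \in Q^+} C(\nu) e^{-\nu}$. The natural approach is to separate the magnitude of each summand from the counting of summands. First I would observe that, since $0 < q_\alpha^{-1} < 1$ for every $\alpha \in \Phi^+$, one has the crude bound
\[
	\prod_{\atop{\alpha \in \Phi^+}{j_\alpha > 0}} \abs{1 - q_\alpha}\, q_\alpha^{-j_\alpha}
	\leqslant \Bigl(\prod_{\alpha \in \Phi^+} \max\{1, q_\alpha - 1\}\Bigr)
	\cdot \prod_{\alpha \in \Phi^+} q_\alpha^{-j_\alpha}.
\]
The first factor is an absolute constant $C_1$ depending only on $\Phi$ and the $q_\alpha$'s. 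For the second factor, set $q_{\min} = \min_{\alpha \in \Phi^+} q_\alpha > 1$; then $\prod_\alpha q_\alpha^{-j_\alpha} \leqslant q_{\min}^{-\sum_\alpha j_\alpha}$. Now if $(j_\alpha) \in \calP(\nu)$, pairing $\nu = \sum_\alpha j_\alpha \alpha\spcheck$ against $\tilde\rho = \tfrac12\sum_{\beta \in \Phi^+}\beta$ gives $\sprod{\nu}{\tilde\rho} = \sum_\alpha j_\alpha \sprod{\alpha\spcheck}{\tilde\rho}$, and $\sprod{\alpha\spcheck}{\tilde\rho} = \height(\alpha\spcheck)$ (or at least is a positive integer bounded above by $h-1$ where $h$ is the Coxeter number), hence $\sprod{\nu}{\tilde\rho} \leqslant (h-1)\sum_\alpha j_\alpha$, i.e. $\sum_\alpha j_\alpha \geqslant \sprod{\nu}{\tilde\rho}/(h-1)$. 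Choosing $A = q_{\min}^{1/(h-1)} > 1$, each summand in $C(\nu)$ is bounded by $C_1 A^{-\sprod{\nu}{\tilde\rho}}$.

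Next I would control $\#\calP(\nu)$, the number of ways to write $\nu$ as an $\NN$-combination of positive co-roots. Here I would use that $\sum_\alpha j_\alpha \leqslant \sprod{\nu}{\tilde\rho_{\min}}$ for a suitable normalization (or more simply: $\sprod{\alpha\spcheck}{\tilde\rho} \geqslant 1$ for all $\alpha$, so $\sum_\alpha j_\alpha \leqslant \sprod{\nu}{\tilde\rho}$). Thus every tuple in $\calP(\nu)$ has coordinate sum at most $\sprod{\nu}{\tilde\rho}$, so $\#\calP(\nu)$ is at most the number of $\abs{\Phi^+}$-tuples of nonnegative integers with sum $\leqslant \sprod{\nu}{\tilde\rho}$, which is $\binom{\sprod{\nu}{\tilde\rho} + \abs{\Phi^+}}{\abs{\Phi^+}}$, a polynomial in $\sprod{\nu}{\tilde\rho}$ of degree $\abs{\Phi^+}$. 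Combining, $\abs{C(\nu)} \leqslant C_1 \cdot \#\calP(\nu) \cdot A^{-\sprod{\nu}{\tilde\rho}} \leqslant C_1 \binom{\sprod{\nu}{\tilde\rho}+\abs{\Phi^+}}{\abs{\Phi^+}} A^{-\sprod{\nu}{\tilde\rho}}$.

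Finally, a polynomial times a geometric decay is again a geometric decay with a slightly smaller base: picking any $A' \in (1, A)$, there is a constant $C$ with $\binom{n + \abs{\Phi^+}}{\abs{\Phi^+}} (A')^{n} \leqslant C A^{n}$ for all $n \geqslant 0$ (equivalently $\binom{n+\abs{\Phi^+}}{\abs{\Phi^+}} \leqslant C (A/A')^n$), since $(A/A')^n$ eventually dominates any fixed polynomial. Applying this with $n = \sprod{\nu}{\tilde\rho} \in \NN$ yields $\abs{C(\nu)} \leqslant C (A')^{-\sprod{\nu}{\tilde\rho}}$, which is the assertion after renaming $A'$ to $A$. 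I do not anticipate a genuine obstacle here; the only point requiring a little care is the arithmetic identity $\sprod{\alpha\spcheck}{\tilde\rho} = \height(\alpha\spcheck) \in \NN^*$ (so that the exponent $\sprod{\nu}{\tilde\rho}$ is a nonnegative integer and is comparable to $\sum_\alpha j_\alpha$ from both sides), which is the standard fact that $\tilde\rho$ pairs to the height on co-roots — this is what makes both the per-term geometric bound and the polynomial count for $\#\calP(\nu)$ go through simultaneously.
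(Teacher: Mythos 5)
Your proof is correct and takes essentially the same approach as the paper: both separate the per-term geometric bound (via $\sum_\alpha j_\alpha \geqslant \sprod{\nu}{\tilde\rho}/K$ with $K=\max_\alpha\sprod{\alpha\spcheck}{\tilde\rho}=\max_\alpha\height(\alpha\spcheck)$ and $q_{\min}>1$) from a polynomial count of the number of summands, then absorb the polynomial factor into a slightly slower geometric rate. The only cosmetic difference is organizational — the paper stratifies $\calP(\nu)$ by the value of $\sum_\alpha j_\alpha$ before summing, whereas you bound $\#\calP(\nu)$ in one shot — and one small slip in your aside: the height of a coroot is bounded by the \emph{dual} Coxeter number minus one, not $h-1$; this is immaterial here since your primary identity $\sprod{\alpha\spcheck}{\tilde\rho}=\height(\alpha\spcheck)$ is what the argument uses.
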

\begin{proof}
	Let
	\[
		K = \max\big\{\sprod{\alpha\spcheck}{\tilde{\rho}} : \alpha \in \Phi^+ \big\}.
	\]
	Since $K \geqslant 1$, we have
	\[
		\sum_{\alpha \in \Phi^+} j_\alpha \geqslant \frac{1}{K}
		\sum_{\alpha \in \Phi^+} j_\alpha \sprod{\alpha\spcheck}{\tilde{\rho}}
		=
		\frac{1}{K} \sprod{\nu}{\tilde{\rho}}.
	\]
	Hence, (\footnote{We write $f \lesssim g$ if there is $C > 0$ such that $f \leqslant C g$.})
	\begin{align*}
		|C(\nu)|
		&\leqslant \sum_{(j_\alpha) \in \calP(\nu)} \prod_{\alpha : j_\alpha > 0} (q_\alpha-1) q_{\alpha}^{-j_\alpha} \\
		&\lesssim 
		\sum_{k \geqslant \frac{1}{K} \sprod{\nu}{\tilde{\rho}}} 
		\Big|\Big\{(j_\alpha) \in \calP(\nu) : \sum_{\alpha \in \Phi^+} j_\alpha = k \Big\}\Big| A^{-2 K k}
	\end{align*}
	where $A^{2K} = \min\{q_\alpha : \alpha \in \Phi^+\}$. Consequently,

	\begin{align*}
		|C(\nu)| 
		&\lesssim\sum_{k \geqslant \frac{1}{K} \sprod{\nu}{\tilde{\rho}}} k^{|\Phi^+|} A^{-2 K k} \\
		&\lesssim A^{-\sprod{\nu}{\tilde{\rho}}}
	\end{align*}
	and the lemma follows.
\end{proof}

\begin{lemma}
	\label{lem:2:2}
	There are $C > 0$ and $c > 0$ such that for all $\mu, \omega \in P^+$,
	\[
		\chi_0(\omega)^{-\frac12} \sum_{\stackrel{w \in W}{w .\mu \succeq \omega}} |C(w . \mu - \omega)|
		\leqslant
		C e^{-c \norm{\mu}}.
	\]
\end{lemma}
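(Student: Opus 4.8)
The strategy is to combine the pointwise bound from Lemma \ref{lem:2:1} with a comparison between $\chi_0(\omega)^{-\frac12}$ and the exponential weight $A^{-\sprod{w.\mu-\omega}{\tilde\rho}}$ appearing there. First I would fix $w\in W$ with $w.\mu\succeq\omega$ and write $\nu = w.\mu-\omega\in Q^+$; Lemma \ref{lem:2:1} gives $|C(\nu)|\lesssim A^{-\sprod{\nu}{\tilde\rho}} = A^{-\sprod{w.\mu}{\tilde\rho}}A^{\sprod{\omega}{\tilde\rho}}$. The key algebraic point is that $\chi_0(\omega)^{-\frac12}$ should also decay like $A^{-\sprod{\omega}{\tilde\rho}}$ up to constants: indeed $\log\chi_0(\omega) = \sum_{\alpha\in\Phi^+}\sprod{\omega}{\alpha}\log q_\alpha$, which for $\omega\in P^+$ is comparable (with constants depending only on $\Phi$ and the $q_i$'s) to $\sprod{\omega}{\tilde\rho}$ since $\tilde\rho=\frac12\sum_{\alpha\in\Phi^+}\alpha$ and all $q_\alpha>1$. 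So $\chi_0(\omega)^{-\frac12}\lesssim A_1^{-\sprod{\omega}{\tilde\rho}}$ for a suitable $A_1>1$, and after possibly shrinking the base one gets a clean bound of the form $\chi_0(\omega)^{-\frac12}|C(w.\mu-\omega)|\lesssim B^{-\sprod{w.\mu}{\tilde\rho}}$ for some $B>1$, where — crucially — the troublesome $\omega$-dependence has cancelled. Here one has to be slightly careful that the two exponential rates match: since $A$ in Lemma \ref{lem:2:1} and the rate in $\chi_0$ both come from the same quantities $q_\alpha$, the cancellation of the $\sprod{\omega}{\tilde\rho}$ term is exact at the level of the dominant exponential, and the leftover is a genuinely decaying term in $\sprod{w.\mu}{\tilde\rho}$.

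Next I would sum over $w\in W$. For each $w$ we have $\sprod{w.\mu}{\tilde\rho}\geqslant \sprod{w_0.\mu}{\tilde\rho} = -\sprod{\mu}{w_0^{-1}.\tilde\rho}$; writing $w_0.\tilde\rho = -\tilde\rho$ gives $\sprod{w.\mu}{\tilde\rho}\geqslant -\sprod{\mu}{\tilde\rho}$, which is the wrong sign and too weak. The correct move is to use that for $\mu\in P^+$ and regular $\tilde\rho\in\overline{\mathfrak a_+}$ one has $\sprod{w.\mu}{\tilde\rho}\geqslant c_0\norm{\mu}$ is \emph{false} in general (take $w=w_0$); instead I should not estimate each term by $\norm\mu$ but rather exploit that the constraint $w.\mu\succeq\omega\in P^+$ forces $w.\mu\in P^+$, hence $w.\mu = \mu$ (the only dominant element in the orbit $W.\mu$), so the sum over $w$ collapses to those $w\in W_\mu$. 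Therefore the whole sum is really $\chi_0(\omega)^{-\frac12}|W_\mu|\,|C(\mu-\omega)|$ when $\mu\succeq\omega$ and zero otherwise, and then Lemma \ref{lem:2:1} with $\nu=\mu-\omega$ together with the comparison $\chi_0(\omega)^{-\frac12}\lesssim A^{-\sprod{\omega}{\tilde\rho}}$ yields a bound $\lesssim A^{-\sprod{\mu-\omega}{\tilde\rho}}A^{-\sprod{\omega}{\tilde\rho}} = A^{-\sprod{\mu}{\tilde\rho}}$, and finally $\sprod{\mu}{\tilde\rho}\geqslant c\norm{\mu}$ for $\mu\in P^+$ since $\tilde\rho$ is strongly dominant, giving the claimed $Ce^{-c\norm\mu}$ after converting the base $A$ to an exponential $e^c$.

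The main obstacle I anticipate is getting the constants in the two exponential weights to line up so that the $\omega$-dependence cancels rather than producing a residual growing or decaying factor in $\omega$: one must check that the rate of decay of $|C(\nu)|$ in $\sprod{\nu}{\tilde\rho}$ (governed by $\min_\alpha q_\alpha$ via $A^{2K}=\min q_\alpha$) is at least as large as the rate at which $\chi_0(\omega)^{-\frac12}$ decays in $\sprod{\omega}{\tilde\rho}$, so that no uncontrolled positive power of $\chi_0(\omega)$ survives. A clean way to sidestep a delicate constant-chase is to prove Lemma \ref{lem:2:1} with the base $A$ taken \emph{equal} to $\min\{q_\alpha^{1/(2K)}:\alpha\in\Phi^+\}$ as written, then observe that since every $q_\alpha\geqslant\min_\beta q_\beta$ we also have $\chi_0(\omega)^{1/2}\geqslant (\min_\beta q_\beta)^{\sprod{\omega}{\tilde\rho}/(\text{something})}$; choosing the exponent bookkeeping carefully, or simply absorbing mismatched constants into $C$ and a smaller $c$, closes the gap. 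The remaining steps — collapsing the $w$-sum using dominance, and bounding $\sprod{\mu}{\tilde\rho}$ below by $c\norm\mu$ on $P^+$ — are routine.
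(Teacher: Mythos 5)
Your first half is on the right track, but the crucial step — collapsing the $w$-sum — rests on a false implication, and you dismissed the correct observation just before you needed it.

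The claim that ``$w.\mu\succeq\omega\in P^+$ forces $w.\mu\in P^+$'' is not true. Dominance order $\succeq$ and the dominant cone $P^+$ are cut out by different conditions: $w.\mu\succeq\omega$ means $w.\mu-\omega$ is a nonnegative combination of simple \emph{coroots}, while $w.\mu\in P^+$ means $\sprod{w.\mu}{\alpha_i}\geqslant 0$ for every simple root $\alpha_i$. The off-diagonal Cartan entries $\sprod{\alpha_j\spcheck}{\alpha_i}$ are $\leqslant 0$, so adding a nonnegative sum of coroots to a dominant $\omega$ can easily produce a non-dominant vector. Concretely, in type $A_2$ take $\mu=\rho=\alpha_1\spcheck+\alpha_2\spcheck$, $\omega=0$, $w=r_1$. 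Then $r_1.\rho=\rho-\alpha_1\spcheck=\alpha_2\spcheck$, so $r_1.\rho\succeq 0=\omega$, yet $\sprod{\alpha_2\spcheck}{\alpha_1}=-1<0$, so $r_1.\rho\notin P^+$. Hence the sum does not collapse to $w\in W_\mu$, and your bound covers only a strict subset of the terms.

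The observation you rejected was in fact the right one, once the constraint is used. You noted that $\sprod{w.\mu}{\tilde\rho}\geqslant c_0\norm{\mu}$ fails for arbitrary $w$ (e.g.\ $w=w_0$), which is true — but the sum is only over $w$ with $w.\mu\succeq\omega\succeq 0$, and this constraint forces $w.\mu$ into the cone $Q^+$ spanned by the simple coroots. On that cone, writing $w.\mu=\sum_i n_i\alpha_i\spcheck$ with $n_i\geqslant 0$ and using the standard fact $\sprod{\tilde\rho}{\alpha_i\spcheck}=1$, one gets $\sprod{w.\mu}{\tilde\rho}=\sum_i n_i\asymp\norm{w.\mu}=\norm{\mu}$ (equivalence of norms in the coordinates $n_i$). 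So for every admissible $w$ the single term already satisfies the desired exponential bound; summing over at most $\abs{W}$ such $w$ only costs a constant. This is the route the paper takes. Your cancellation bookkeeping in the first paragraph is also more delicate than necessary: since both $\sprod{w.\mu-\omega}{\tilde\rho}\geqslant 0$ and $\sprod{\omega}{\tilde\rho}\geqslant 0$, you can simply replace the two bases $A,A_1>1$ by their minimum $A_2=\min(A,A_1)>1$, obtaining $A^{-\sprod{w.\mu-\omega}{\tilde\rho}}A_1^{-\sprod{\omega}{\tilde\rho}}\leqslant A_2^{-\sprod{w.\mu}{\tilde\rho}}$, with no rate-matching to worry about.
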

\begin{proof}
	Let $A$ be the constant from Lemma \ref{lem:2:1}. Since $1 < A < \min\{q_\alpha:\alpha\in\Phi^+\}$, we have
	\begin{align*}
		\chi_0(\omega)^{-\frac12}\leqslant\prod_{\alpha\in{\Phi^+}} q_\alpha^{-\frac12\sprod{\omega}{\alpha}}
		\leqslant A^{-\sprod{\omega}{\tilde{\rho}}}.
	\end{align*}
	Hence, by Lemma \ref{lem:2:1}, we can write
	\begin{align*}
		\sum_{\stackrel{w\in W} {w.\mu\succeq\omega}}\chi_0(\omega)^{-\frac12}|C(w.\mu-\omega)|
		&\lesssim\sum_{\stackrel{w\in W}{w.\mu\succeq\omega}}A^{-\sprod{w.\mu-\omega}{\tilde{\rho}}}
		A^{-\sprod{\omega}{\tilde{\rho}}}.
	\end{align*}
	Since $w.\mu\succeq\omega$ and $\omega\in P^+$, we have
	\[
		\sprod{w.\mu}{\tilde{\rho}}\gtrsim\norm{w .\mu}=\norm{\mu},
	\]
	thus there is $c > 0$ such that 
	\begin{align*}
		\sum_{\stackrel{w \in W} {w .\mu\succeq\omega}}\chi_0(\omega)^{-\frac12} |C(w .\mu-\omega)|
		\lesssim e^{-c \norm{\mu}}
	\end{align*}
	which completes the proof.
\end{proof}
Using \eqref{eq:2:9} and Lemma \ref{lem:2:2}, we can easily deduce the following consequence.
\begin{theorem}
	\label{thm:2:3}
	Let $u$ be the real function on $P^+\times V_s$ with Fourier--Gelfand transform satisfying
	\[
		\calF\!u(\mu,\theta)=|W_\mu| m_\mu(i\theta)\quad\text{ for all }\mu\in P^+\text{ and }\theta\in\mathfrakA.
	\]
	Then for each $\mu\in P^+$, the function $u(\mu,\cdot)$ is supported in $\bigl\{x\in V_s:\sigma(o, x)\preceq\mu\bigr\}$.
	Moreover, there are $C>0$ and $c > 0$ such that
	\begin{equation}
		\label{eq:4}
		|u(\mu, x)|\leqslant Ce^{-c\norm{\mu}}\quad\text{ for all }\mu\in P^+\text{ and }x\in V_s.
	\end{equation}
\end{theorem}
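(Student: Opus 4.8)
The plan is to read off both conclusions directly from the closed formula \eqref{eq:2:9} for $u(\mu,x)$, combined with the two preparatory lemmas. Write $\omega=\sigma(o,x)$. Formula \eqref{eq:2:9} gives
\[
	u(\mu,x)=\chi_0(\omega)^{-\frac12}\,\frac{|W|}{|W\!.\mu|}\sum_{\atop{w\in W}{\omega\preceq w.\mu}}C(w.\mu-\omega),
\]
and it was already observed in deriving \eqref{eq:2:9} (via the inversion formula, the expansion $1/\bfc=\sum_{\nu\in Q^+}C(\nu)e^{-\nu}$, and the pairing $\sprod{m_\mu}{m_{\omega+\nu}}$) that the integral producing $u(\mu,x)$ vanishes unless $\omega\preceq w.\mu$ for some $w\in W$; but if $\omega\preceq w.\mu$ for some $w$ then, since $\omega$ is dominant and $w.\mu\preceq\mu$ in the dominance order, we get $\omega\preceq\mu$. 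Hence $u(\mu,\cdot)$ is supported in $\{x\in V_s:\sigma(o,x)\preceq\mu\}$, which is the finite-propagation-speed statement. One should note that $u$ is real: its partial Gelfand--Fourier transform $|W_\mu|m_\mu(i\theta)=|W_\mu|\sum_{\nu\in W.\mu}e^{i\sprod{\nu}{\theta}}$ is invariant under $\theta\mapsto-\theta$ (pair off $\nu$ and $-w_0.\nu$), which matches the reality of $P_\lambda(i\theta)$ and the inversion formula, so $u(\mu,x)\in\RR$.

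For the exponential bound, take absolute values in the displayed formula. The prefactor $|W|/|W\!.\mu|$ is bounded by $|W|$, a constant. The remaining sum $\chi_0(\omega)^{-\frac12}\sum_{w\in W,\ w.\mu\succeq\omega}|C(w.\mu-\omega)|$ is exactly the quantity estimated in Lemma \ref{lem:2:2}, which yields $\le Ce^{-c\norm{\mu}}$ uniformly in $\mu,\omega\in P^+$. Since every $x\in V_s$ has $\sigma(o,x)=\omega$ for some dominant co-weight $\omega$, and $V_\omega(o)$ is finite and the right-hand side is independent of $x$, we obtain $|u(\mu,x)|\le Ce^{-c\norm{\mu}}$ for all $\mu\in P^+$ and all $x\in V_s$, absorbing the factor $|W|$ into $C$. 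This is \eqref{eq:4}, and the theorem follows.

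There is really no serious obstacle left at this point, since the work has been front-loaded into Lemmas \ref{lem:2:1} and \ref{lem:2:2} and into the explicit evaluation \eqref{eq:2:9}; the proof of the theorem is a bookkeeping assembly. If anything deserves a word of care, it is the passage from "the defining integral vanishes unless $\omega\preceq w.\mu$ for some $w$" to the support statement as phrased, \emph{i.e.} checking that $\omega\preceq w.\mu$ together with $\omega\in P^+$ forces $\omega\preceq\mu$ --- this uses that for dominant $\omega$ one has $w.\mu\preceq\mu$ for every $w\in W$, so transitivity of $\preceq$ closes the argument. The reality of $u$ should also be stated explicitly, as it is used later when the kernel is built from $u$ and property \eqref{en:0:2} of Theorem \ref{th - kernel intro} is verified.
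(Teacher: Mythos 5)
Your proposal is correct and follows exactly the route of the paper: the support statement is read off from the fact that the integral (equivalently, the sum over $w$ with $w.\mu\succeq\omega$ in \eqref{eq:2:9}) is empty unless $\omega\preceq\mu$, and the exponential bound is Lemma~\ref{lem:2:2} applied to \eqref{eq:2:9} after bounding the constant prefactor $|W|/|W.\mu|=|W_\mu|\leqslant|W|$. The side remark about the reality of $u$ is a correct sanity check, though the paper treats it as part of the hypothesis rather than something to verify.
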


\begin{remark}
	In the case of $\tilde{A}_2$ buildings we can obtain sharper estimates than \eqref{eq:4}. Namely, there is $C > 0$
	such that for all $\mu \in P^+$ and $x \in V_s$,
	\[
		|u(\mu, x)| \leqslant C q^{-\max\{\sprod{\alpha_1}{\mu}, \sprod{\alpha_2}{\mu}\}}
	\]
	where $q+1$ denotes the thickness of the building. 
\end{remark}

\begin{remark}
	It would be interesting to extend Theorem \ref{thm:2:3} to all fundamental solutions of \eqref{eq:2:7}.
	We intend to return to this question in the future.
\end{remark}

\section{Kernel construction}
\label{s - kernel construction}
In this section we prove our main result stated in the introduction as Theorem \ref{th - kernel intro} which holds for any 
(regular, thick, locally finite) affine building of reduced type. The proof, inspired by the kernel construction of Brooks 
and Lindenstrauss in \cite[Section 2]{BrooksLindenstrauss2010}, \cite[Section 3]{BrooksLindenstrauss2013} and 
\cite[Section 3.1]{BrooksLindenstrauss2014}, uses the solution of the multitemporal wave equation described in Theorem 
\ref{thm:2:3}.

\begin{theorem}
	\label{thm:3:1}
	Let $z_0=\zeta_0+i\theta_0\in\Sigma$. There are $M(\zeta_0), N(\zeta_0)\in\NN$ such that, for all integers
	$M\geqslant M(\zeta_0)$ and $N\geqslant N(\zeta_0)$, there is a function $k: V_s \times V_s \rightarrow \CC$ such that
	\begin{enumerate}[start=1, label=\rm (\roman*), ref=\roman*]
		\item
		\label{en:2:1}
		for all $x, y \in V_s$ the value of $k(x, y)$ depends only on the vectorial distance $\sigma(x, y)$; 
		moreover, $k(x, y)$ vanishes when
		\[
			\sigma(x, y)\not\preceq16M\bigl(8\hspace{1pt}\pi hMN\bigr)^r\rho
		\]
		where
		\[
			h=\sum_{\alpha\in\Phi^+}\height(\alpha\spcheck)
		\]
		and $\height(\alpha\spcheck)$ denotes the height of the coroot $\alpha\spcheck$ with respect to the coroot basis
		$\big\{\alpha_1\spcheck, \ldots, \alpha_r\spcheck\big\}$;
		\item
		\label{en:2:2}
		there are $C > 0$ and $c > 0$ such that
		\[
			\sup_{x, y \in V_s} |k(x, y)| \leqslant C e^{-cN};
		\]
		\item
		\label{en:2:3}
		for all $x \in V_s$ and $z \in \Sigma$, we have 
		\[
			\calF\!\bigl(k(x,\cdot)\bigr)(z)\geqslant-1\quad\text while\quad\calF\!\bigl(k(x,\cdot)\bigr)(z_0)\geqslant M.
		\]
	\end{enumerate}
\end{theorem}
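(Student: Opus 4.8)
The strategy is to build $k$ from the wave-equation solution $u$ of Theorem \ref{thm:2:3} by taking a suitable symmetric polynomial in time-shift operators applied to $u$, then normalizing. Concretely, I would fix a reference point and first consider, for a one-parameter family of ``propagation times'' $t_j \in P^+$ (say $t_j = j N \rho$ for $j = 1, \dots, M$), the functions $x \mapsto u(t_j, x)$. By Theorem \ref{thm:2:3}, each is supported in $\{\sigma(o,x) \preceq t_j\}$ and is bounded by $Ce^{-c\|t_j\|} \leqslant Ce^{-c'jN}$. The plan is to form a linear combination $\sum_j a_j u(t_j, \cdot)$ and, more importantly, to understand its Gelfand--Fourier transform: since $\calF u(\mu, \theta) = |W_\mu| m_\mu(i\theta)$ and Macdonald functions/monomials span the $W$-invariant trigonometric polynomials, the transform of $u(t_j, \cdot)$ is $|W_{t_j}| m_{t_j}(i\theta)$, which can be rewritten in terms of the spectral variable $z$ via the identity $m_{t_j}(z) = \sum_{w \in W} e^{\langle w.z, t_j\rangle}$ up to the stabilizer factor. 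The key observation is that with $t_j$ a multiple of $\rho$ this becomes essentially a \emph{univariate} quantity: $\langle w.z, \rho\rangle$ ranges over a finite set as $w$ runs through $W$, and evaluating $m_{jN\rho}(z)$ amounts to a power sum. This reduces property \eqref{en:2:3} to a one-dimensional interpolation/positivity problem on the spectral side.

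For the spectral step I would set $s = s(z) = $ (a normalized version of) $e^{N\langle z_0, \rho\rangle}$ or rather work with the quantity $\calF(u(N\rho,\cdot))$ as a function on $\Sigma$, call it $\Psi(z)$; then $\calF(u(jN\rho,\cdot))$ is a Chebyshev-type polynomial $T_j$ in $\Psi$ (more precisely a symmetric power sum in the values $e^{N\langle w.z,\rho\rangle}$, which for the purposes of positivity can be controlled by a single Chebyshev polynomial after bounding the off-diagonal $w$-terms, exactly as in the tree case where the spherical function reduces to one variable). I would then choose the coefficients $a_j$ so that $\sum_j a_j T_j$ is $\geqslant -1$ on the whole real interval corresponding to $\Sigma$ but takes value $\geqslant M$ at the point corresponding to $z_0$ — this is the classical Brooks--Lindenstrauss trick of using $\sum_{j=1}^M T_j$ (or $\sum_{j=1}^M \cos(j\vartheta) = \mathrm{Re}\,\frac{e^{i\vartheta}(e^{iM\vartheta}-1)}{e^{i\vartheta}-1}$), whose partial Fejér-type sums are bounded below by a constant independent of $M$ while attaining the value $M$ at $\vartheta = 0$. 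The parameter $\zeta_0 = \Re z_0$ forces a rescaling of the interval, which is why $M(\zeta_0)$ and $N(\zeta_0)$ must depend on $\zeta_0$: one needs $N$ large enough (depending on how close $\zeta_0$ is to the boundary of $\co(W.\log\chi_0^{1/2})$) to ensure the ``off-diagonal'' contributions from $w \neq e$ and the $e^{-cN}$ error terms are negligible compared to the main Chebyshev term.

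After the spectral analysis is set up, property \eqref{en:0:2} follows immediately: $|k(x,y)| \leqslant \sum_j |a_j|\, |u(t_j,x)| \leqslant \sum_{j=1}^M |a_j| C e^{-c'jN}$, and since the $|a_j|$ grow at most polynomially (they are bounded Fourier coefficients), the whole sum is $\leqslant C' e^{-c''N}$ after absorbing the polynomial factor. Property \eqref{en:0:1} also follows: $k(x,y)$ depends only on $\sigma(x,y)$ because each $u(t_j,\cdot)$ does (it lies in $\mathscr{A}_0\delta_o$ up to translation), and the support bound $\sigma(x,y) \preceq t_M = MN\rho$ (times a constant accounting for the combinatorics — this is where the $h = \sum \height(\alpha\spcheck)$ and the $(8\pi h MN)^r$ factor in Theorem \ref{thm:3:1} enter, since one iterates the construction $r$ times or uses products of $r$ Chebyshev factors, one per fundamental direction, to kill all components of $\sigma(x,y)$, not just the $\rho$-direction). \textbf{The main obstacle} I expect is precisely this last point: unlike the tree case, a single one-parameter family of propagation times only controls the spectral behavior along the line $\RR\rho \subset \mathfrak{a}_\CC$, whereas $\Sigma$ is $r$-dimensional. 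To get $\calF(k(x,\cdot))(z) \geqslant -1$ for \emph{all} $z \in \Sigma$ one must either tensor together $r$ univariate constructions (multiplying kernels, which multiplies supports and explains the $r$-th power blow-up in the support radius) and verify that the product stays $\geqslant -1$ — which requires each factor to be $\geqslant -1/(\text{something})$ and close to $1$ away from a small set — or argue that the power-sum symmetric functions $m_{jN\rho}$ already separate points of $\Sigma/W$ well enough. Controlling the error terms uniformly over the compact but $r$-dimensional spectrum $\Sigma$, and tracking the explicit constants $M(\zeta_0), N(\zeta_0)$ through this multidimensional reduction, is the technical heart of the argument.
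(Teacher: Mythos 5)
Your starting point is the right one — use the wave solution $u(\mu,\cdot)$ of Theorem~\ref{thm:2:3} for $\mu$ along the $\rho$-direction, exploit its finite propagation to get the support bound and its uniform decay $e^{-c\|\mu\|}$ to get the $e^{-cN}$ bound — and you correctly sense that the work concentrates on property~\eqref{en:2:3}. But two of the key mechanisms in the actual argument are missing, and the way you propose to fill the most serious gap is not what the paper does and would not straightforwardly work.

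First, the positivity bound $\calF(k(x,\cdot))\geqslant -1$ is not achieved by a \emph{linear} combination $\sum_j a_j\, u(jN\rho,\cdot)$ with Chebyshev/Dirichlet-kernel coefficients. The paper sets
\[
	K_{M,b}(\theta)=\frac1{1+4|W|M}\Bigl(\sum_{k=0}^{4M}m_{k\rho}(ib\theta)\Bigr)^{\!2}-1,
\]
a \emph{square} of a sum minus $1$, and proves that $m_{k\rho}(z)\in\RR$ whenever $z\in\Sigma$ (using the defining condition $-\overline z\in W.z+i2\pi L$ together with $w_0.\rho=-\rho$). The square then makes $K_{M,b}\geqslant -1$ on $\Sigma$ automatic, with no coefficient-tuning or Fej\'er-type inequality needed. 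Your proposal is vague exactly where this matters: $\sum_{j=1}^M\cos(j\vartheta)$ is a Dirichlet kernel, not a Fej\'er kernel, and is not bounded below by a fixed constant; if you meant a Fej\'er kernel you are already implicitly squaring, and you should say so, because the square is also what makes the autocorrelation coefficients $c_\nu$ in $K_{M,b}=\sum_\nu c_\nu m_{b\nu}$ nonnegative and summable, which in turn gives both the support bound and the $e^{-cN}$ estimate cleanly.

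Second, and more importantly, your explanation of the $r$-th power in the support radius — ``iterate the construction $r$ times'' or ``tensor together $r$ univariate constructions, one per fundamental direction'' — is wrong, and you flag it yourself as the heart of the matter. The paper uses a \emph{single} one-parameter family $m_{k\rho}$, $0\leqslant k\leqslant 4M$, evaluated at $ib\theta$ for one integer $b$, and never multiplies $r$ kernels together. The factor $(8\pi hMN)^r$ arises from \emph{Dirichlet's simultaneous approximation theorem}: to make $K_{M,b}(z_0)\geqslant M$ one needs the imaginary part $b\theta_0$ to be nearly in $2\pi L$ (so that the exponentials $e^{ikb\langle\theta_0,w.\rho\rangle}$ are all close to $1$ and add constructively), and a $b$ with $b\langle\theta_0,\alpha_j^\vee\rangle\in[0,2\pi/B)+2\pi\ZZ$ for all $j=1,\dots,r$ exists only in the range $1\leqslant b'\leqslant B^r$, where $B=8\pi hMN$. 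That Diophantine step — not tensoring — is what the $r$-th power measures, and it is also what makes $b\geqslant N$ while $b\leqslant 2B^r$, hence the stated support $16M(8\pi hMN)^r\rho$ and the decay $e^{-cb}\leqslant e^{-cN}$. Relatedly, your claim that ``a single one-parameter family only controls the spectral behavior along the line $\RR\rho$'' overstates the problem: $m_{k\rho}(z)=\sum_{w\in W}e^{k\langle w.z,\rho\rangle}$ involves the whole $W$-orbit of $\rho$, which spans $\mathfrak a$ since $\rho$ is regular; what actually requires care (and is handled in the paper via the stabilizer $W(\zeta_0)$, the gap $\kappa=\min\{\langle\zeta_0-w.\zeta_0,\rho\rangle: w\notin W(\zeta_0)\}>0$, and the split $\sum m_{k\rho}(bz_0)=1+S_1-S_2+S_3+S_4+S_5$) is isolating the dominant term $S_1=|W(\zeta_0)|e^{4Mb\langle\zeta_0,\rho\rangle}$ from the oscillatory and subleading contributions, and this is a genuinely different — and more delicate — computation than the ``bound the off-diagonal $w$-terms as in the tree case'' that you sketch.
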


\begin{proof}
	We are going to define a kernel with desired properties by describing it on the Gelfand--Fourier side.
	In view of Theorem \ref{thm:2:3}, the convenient building blocks are the elementary symmetric functions which essentially
	reduces the proof to showing \eqref{en:2:3}.
	
	\smallskip

	Let $b$ and $M$ be two positive integers whose values will be specified later. Let $K_{M, b}$ be the complex function on 
	$\mathfrakA_\CC$ defined by
	\begin{equation}
		\label{eq:3:10}
		K_{M, b}(\theta)=\frac1{1+4|W|M} \Bigl(\sum_{k=0}^{4M}m_{k\rho}(ib\theta)\Bigr)^2-1.
	\end{equation}
	It is clear that $K_{M, b}$ is invariant under the action of $W$ as well as it is invariant under translations by vectors 
	from $2\pi iL$. Given $z\in\Sigma$, there are $\tilde{w}\in W$ and $\ell\in L$ such that 
	$\tilde{w}.z+i2\pi\ell=-\overline{z}$. We claim that for all $k\in\NN$,
	\begin{equation}
		\label{eq:3:11}
		m_{k \rho}(z) \in \RR.
	\end{equation}
	Indeed, we have
	\begin{align*}
		\overline{m_{k \rho}(z)} 
		&=\sum_{w\in W}e^{k\sprod{\overline{z}}{w.\rho}} \\
		&=\sum_{w\in W}e^{k\sprod{-\tilde{w}.z-i2\pi\ell}{w.\rho}} \\
		&=\sum_{w\in W}e^{k\sprod{z}{\tilde{w}^{-1}ww_0.\rho}}
		=m_{k\rho}(z)
	\end{align*}
	where $w_0$ is the longest element of $W$ and $w_0 .\rho=-\rho$. Using \eqref{eq:3:11} we easily deduce that
	$K_{M, b}\geqslant-1$ on $\Sigma$. 
	
	Our next step is to show that there are $M(\zeta_0), N(\zeta_0) \in \NN$ such that for all integers $M \geqslant M(\zeta_0)$
	and $N\geqslant N(\zeta_0)$ there is an integer $b\geqslant N$ such that
	\begin{equation}
		\label{eq:3:12}
		K_{M,b}(z_0)\geqslant M.
	\end{equation}
	First, let us see how \eqref{eq:3:12} allows us to complete the proof. Using the inversion formula \eqref{eq:1:3}, we define 
	the kernel $k:V_s \times V_s \rightarrow\CC$ by
	\[
		k(x, y)=\biggl(\frac1{2\pi}\biggr)^r\frac{W(q^{-1})}{\abs{W}} \int_{\mathscr{D}}
		K_{M, b}(\theta) \overline{P_{\omega}(i\theta)} \: \frac{{\rm d}\theta}{\abs{\bfc(i\theta)}^2}
	\]
	where $\omega=\sigma(x, y)$. It follows from \eqref{eq:3:10} and \eqref{eq:3:12} that $k(x, y)$ satisfies \eqref{en:2:3}. 
	Next, we deduce from \eqref{eq:3:10} that
	\[
		\int_{\mathscr{D}}K_{M, b}(\theta) \overline{P_{\omega}(i\theta)} \: \frac{{\rm d} \theta}{\abs{\bfc(i\theta)}^2}
	\]
	vanishes whenever $\omega\not\preceq8Mb\rho$. Hence, $k(x, y)\neq0$ implies that $\sigma(x,y)\preceq8Mb\rho$,
	which yields \eqref{en:2:1} because $b\leqslant2B^r$. Thus it remains to prove \eqref{en:2:2}. 
	Observe that, according to Corollary \ref{thm:2:3}, there are $C > 0$ and $c > 0$ such that for all $\mu \in P^+$,
	\begin{equation}
		\label{eq:3:25}
		\|\calF^{-1}m_{\mu}\|_\infty\leqslant Ce^{-c \norm{\mu}}.
	\end{equation}
	We claim that
	\begin{equation}
		\label{eq:3:23}
		K_{M, b}(\theta)=\sum_{\atop{\nu\in P^+}{0\prec\nu\preceq 8M\rho}} c_\nu m_{\nu}(ib \theta)
 	\end{equation}
	with $0 \leqslant c_{\nu} \leqslant 1$. Consider the Euclidean Fourier transform between $\mathfrakA/2\pi L$ and $P$.
	Then $m_\nu(ib\theta)$ corresponds to the characteristic function of $bW.\nu$,
	\[
		\sum_{k=0}^{4M}m_{k\rho}(ib\theta)
	\]
	to the characteristic function of the set
	\[
		X=\bigsqcup_{k=0}^{4M}\bigl(kbW.\rho\bigr),
	\]
	whose cardinality $|X|$ is equal to $1+4|W|M$, and $K_{M,b}(\theta)$ to
	\[
		\frac1{|X|}\ind{X}*\ind{X}-\ind{\{0\}}.
	\]
	As $X$ is invariant under $W$ and $-\text{Id}$, we have
	\begin{equation}
		\label{eq:3:23bis}
		\ind{X}*\ind{X}
		= |X| \ind{\{0\}}+\sum_{\substack{\nu\in P^+\\0\prec\nu\preceq 8M\rho}} c_\nu'\ind{bW.\nu}\,,
	\end{equation}
	where $c_\nu'=|(b\nu+X) \cap X | \in [0, |X| ]$. In conclusion, \eqref{eq:3:23bis} corresponds to \eqref{eq:3:23} 
	via the Euclidean Fourier transform, with $c_\nu=c_\nu'/ |X| \in[0,1]$.

	Lastly, by \eqref{eq:3:23} and \eqref{eq:3:25}, we get
	\begin{align*}
		\sup_{x, y \in V_g} \abs{k(x, y)}
		&\leqslant \sum_{\stackrel{\nu \in P^+}{0 \prec \nu \preceq 8 M \rho}}
		\|\calF^{-1} m_{b \nu}\|_\infty \\
		&\leqslant
		\sum_{\stackrel{\nu \in P^+}{0 \prec \nu \preceq 8 M \rho}}
		e^{-c b |\nu|} \\
		&\leqslant
		\sum_{j = 1}^{8 M \norm{\rho}}
		e^{-c b j} \# \big\{\nu \in P^+ : |\nu| = j \big\} \\
		&\leqslant
		C'
		\sum_{j = 1}^{8 M \norm{\rho}}
		j^{r-1} e^{-c b j}
		=
		\calO(e^{-c b})
	\end{align*}
	which is $\calO(e^{-c N})$. This proves \eqref{en:2:2}. 

	\smallskip
	It remains to show \eqref{eq:3:12}. The proof is divided into three cases.

	\noindent
	\emph{Case 1:} If $\theta_0 = 0$, then for all $k \in \NN$,
	\begin{align*}
		m_{k \rho}(b \zeta_0) 
		&= \sum_{w \in W} e^{k b \sprod{\zeta_0}{w . \rho}} \\
		&= \frac12 \sum_{w \in W} e^{k b \sprod{\zeta_0}{w . \rho}}  
		+ \frac12 \sum_{w \in W} e^{k b \sprod{\zeta_0}{ww_0 . \rho}} \\
		&= \sum_{w \in W} \cosh\big(k b \sprod{\zeta_0}{w . \rho}\big) \\
		&\geqslant |W|.
	\end{align*}
	Thus
	\begin{equation}
		\label{eq:3:13}
		K_{M, b}(\zeta_0)
		\geqslant \frac{1}{1 + 4 |W|M} \Big(\sum_{k = 0}^{4M} 
		m_{k\rho}(b \zeta_0 )\Big)^2 - 1
		\geqslant
		4 M |W|
	\end{equation}
	for every $b, M \in \NN$. In this case we set $b = N$.

	\smallskip	 

	\noindent
	\emph{Case 2:} Assume that $\zeta_0 = 0$. We start by the following observation: if
	\[
		0 < |x| \leqslant \frac{\pi}{8 M+1},
	\]
	then
	\begin{align*}
		D_{4M}(i x) = 
		\sum_{k=-4M}^{4M} e^{i k x} &= 
		\frac{\sin(4M+\tfrac12)x}{\sin\tfrac12x} \\
		&= (8M+1)\frac{\sin(4M+\tfrac12)x}{(4 M+\tfrac{1}{2})x} \frac{\tfrac{1}{2}x}{\sin\frac12x} \\
		&\geqslant\frac2\pi\bigl(8M+1\bigr).
	\end{align*}
	If $x = 0$ then
	\[
		D_{4M}(i x)=\sum_{k=-4M}^{4M}e^{ikx}=8M+1.
	\]
	Therefore
	\[
		D_{4M}(ix)
		\geqslant
		\frac2\pi(8M+1)
		\quad\text{ for }x \in\left[-\frac\pi{8M+1},\frac\pi{8M+1}\right].
	\]
	Using the last inequality, we obtain
	\begin{align}
		\nonumber
		\sum_{k = 0}^{4M} m_{k\rho}(i \theta) 
		&= 1 - \frac12 |W| + \frac12 \sum_{w \in W} D_{4M}(i\sprod{\theta}{w . \rho}) \\
		\nonumber
		&\geqslant
		1 - \frac12 |W| + \frac{8}{\pi}|W| M + \frac{1}{\pi} |W| \\
		\label{eq:3:14}
		&\geqslant
		1 + 2 |W|M
	\end{align}
	provided that
	\[
		\sprod{\theta}{w . \rho} 
		\in \Big[-\frac{\pi}{8 M + 1}, \frac{\pi}{8M+1}\Big]
		\quad \text{ for all } w \in W.
	\]
	Next, we are going to find $b$ such that
	\begin{equation}
		\label{eq:3:24}
		b \sprod{\theta_0}{w . \rho} \in \Big[-\frac{\pi}{8 M + 1}, \frac{\pi}{8M+1} \Big] + 2 \pi \ZZ
		\quad \text{ for all } w \in W.
	\end{equation}
	Let
	\begin{equation}
		\label{eq:3:15}
		B = 2 h(8M+1) N. 
	\end{equation}
	By Dirichlet's approximation theorem, there is $b' \in \{1, 2, \ldots, B^r\}$ such that
	\[
		b' \sprod{\theta_0}{\alpha_j\spcheck} \in \Big[0,\frac{2\pi}{B}\Big) + 2 \pi \ZZ
		\quad\text{ for all } j \in \{1, 2, \ldots, r\}.
	\]
	Hence, 
	\[
		b'\sprod{\theta_0}{\alpha\spcheck}
		\in
		\Big[0,\height(\alpha\spcheck)\frac{2\pi}{B}\Big)+2\pi\ZZ
		\quad\text{ for all }\alpha\in \Phi^{+},
	\]
	and so
	\[
		2 b' \sprod{\theta_0}{w . \rho} 
		\in 
		\Big(-\frac{2\pi h}{B},  \frac{2\pi h}{B} \Big)
		+ 2\pi \ZZ
		\quad\text{ for all } w \in W.
	\]
	Let $b = 2 b' \lceil N/ (2b') \rceil$. Then $2 N \leqslant b \leqslant 2 b' N$. Moreover, if $N \leqslant 2 b'$ then
	\[
		b = 2b' \leqslant 2 B^r.
	\]
	Otherwise $2 b' < N$ and
	\[
		b \leqslant N + 2 b' < 2 N \leqslant B \leqslant 2 B^r.
	\]
	Hence, we get \eqref{eq:3:24}. Now, in view of \eqref{eq:3:14},
	\begin{align*}
		K_{M, b}(\theta_0)
		&=
		\frac{1}{1 + 4 |W|M} \bigg(\sum_{k=0}^{4M} m_{k\rho}(i b \theta_0)\bigg)^2 - 1 \\
		&\geqslant
		\frac{1}{1 + 4 |W|M} \Big(1 + 2 |W| M\Big)^2 - 1 \\
		&\geqslant
		\frac12 |W| M \geqslant M
	\end{align*}
	for every $M \in \NN$.
	\smallskip 

	\noindent
	\emph{Case 3:} It remains to consider $z_0=\zeta_0+i\theta_0\in\Sigma$ with
	$\zeta_0\in\overline{\mathfrakA_+}\smallsetminus\{0\}$ 
	and $\theta_0\in\mathfrakA$. In fact this case requires more work. First, let us observe that $\sprod{\zeta_0}{\rho}>0$. 
	As $z_0\in\Sigma$, there are $w_1\in W$ and $\ell\in L$ such that $w_1.\zeta_0=-\zeta_0$ and 
	$w_1.\theta_0+2\pi\ell=\theta_0$. Let us consider
	\[
		W(\zeta_0) = \big\{w \in W: w . \zeta_0 = \zeta_0 \big\}.
	\]
	We claim the following
	\begin{claim}
		\[
			W(\zeta_0) 
			=
			w_0 W(\zeta_0) w_1 
			= \big\{w \in W : \sprod{w . \zeta_0}{\rho} = \sprod{\zeta_0}{\rho} \big\}.
		\]
	\end{claim}
	To prove the first equality, let $w \in W(\zeta_0)$. Then
	\[
		\sprod{w_0 w w_1. \zeta_0}{\rho} 
		=
		\sprod{w. \zeta_0}{\rho} = \sprod{\zeta_0}{\rho}
	\]
	where we have used $w_1 . \zeta_0 = - \zeta_0$ and $w_0 . \rho = -\rho$.
	To justify the second equality, it is enough to prove the inclusion $\supseteq$. Since 
	$\zeta_0 \in \overline{\mathfrakA_+} \setminus \{0\}$, for each $w\in W$, there are $\gamma_j\geqslant 0$ such that
	\begin{equation}
		\label{eq:3:16}
		\zeta_0 - w.\zeta_0 = \sum_{j = 1}^r \gamma_j \alpha_j.
	\end{equation}
	Thus if $\sprod{w . \zeta_0}{\rho} = \sprod{\zeta_0}{\rho}$, then
	\[
		0=\sprod{\zeta_0-w.\zeta_0}{\rho}=\sum_{j=1}^r\gamma_j,
	\]
	and so $\gamma_1=\ldots=\gamma_r=0$, that is $w\in W(\zeta_0)$, proving the claim 

	\smallskip
	For future use, let us observe that 
	\begin{equation}
		\label{eq:3:17}
		\kappa = \min\big\{\sprod{\zeta_0-w.\zeta_0}{\rho}:w\in W\setminus W(\zeta_0)\big\}>0. 
	\end{equation}
	Since for each $b\in\NN$ and $k\in\ZZ$, we have
	\begin{align*}
		\sum_{w \in W(\zeta_0)}e^{ikb\sprod{\theta_0}{w.\rho}}
		&=\sum_{w\in W(\zeta_0)}e^{ikb\sprod{\theta_0}{w_1^{-1}ww_0.\rho}} \\
		&=\sum_{w \in W(\zeta_0)}e^{-ikb\sprod{\theta_0}{w.\rho}},
	\end{align*}
	we obtain
	\begin{align*}
		\sum_{w \in W(\zeta_0)} e^{k b \sprod{z_0}{w . \rho}} 
		&= 
		e^{kb \sprod{\zeta_0}{\rho}} \sum_{w \in W(\zeta_0)} e^{ikb\sprod{\theta_0}{w . \rho}} \\
		&=
		e^{kb \sprod{\zeta_0}{\rho}}
		\frac12\bigg\{
		\sum_{w \in W(\zeta_0)} e^{ikb\sprod{\theta_0}{w . \rho}} 
		+
		\sum_{w \in W(\zeta_0)} e^{-ikb\sprod{\theta_0}{w . \rho}} \bigg\} \\
		&=
		e^{kb \sprod{\zeta_0}{\rho}} \sum_{w \in W(\zeta_0)} \cos\sprod{\theta_0}{w. \rho}.
	\end{align*}
	Now, using $\cos t=1-2\bigl(\sin{\frac t2}\bigr)^2$, we arrive at
	\begin{align*}
		\sum_{w \in W(\zeta_0)} e^{k b \sprod{z_0}{w . \rho}}
		&=
		|W(\zeta_0)| e^{kb \sprod{\zeta_0}{\rho}}
		-
		2 e^{kb \sprod{\zeta_0}{\rho}}
		\sum_{w \in W(\zeta_0)} \bigg(\sin \frac{kb}{2} \sprod{\theta_0}{w . \rho} \bigg)^2.
	\end{align*}
	In order to estimate $K_{M, b}(z_0)$, we write
	\begin{equation}
		\label{eq:3:18}
		\sum_{k = 0}^{4 M} m_{k \rho}(b z_0)
		=
		1 + S_1 - S_2 + S_3 + S_4 + S_5
	\end{equation}
	where we have set
	\begin{equation}
		\label{eq:3:19}
		S_1 = \abs{W(\zeta_0)}e^{4 M b\sprod{\zeta_0}{\rho}},
	\end{equation}
	and 
	\begin{equation}
		\label{eq:3:20}
		\begin{aligned}
		S_2 &= 2 e^{4Mb\sprod{\zeta_0}{\rho}} \sum_{w  \in W(\zeta_0)} \big(\sin 2 M b \sprod{\theta_0}{w . \rho} \big)^2 \\
		S_3 &= \sum_{k = 1}^{4M-1} e^{kb\sprod{\zeta_0}{\rho}} \sum_{w \in W(\zeta_0)} \cos kb \sprod{\theta_0}{w . \rho} \\
		S_4 &= \sum_{k = 1}^{4M} \sum_{\stackrel{w \in W \setminus W(\zeta_0)}{\sprod{\zeta_0}{w.\rho} > 0}}
		e^{kb \sprod{z_0}{w . \rho}}\\
		S_5 &= \sum_{k = 1}^{4M} \sum_{\stackrel{w \in W \setminus W(\zeta_0)}{\sprod{\zeta_0}{w.\rho} \leqslant 0}}
		e^{kb \sprod{z_0}{w . \rho}}.
		\end{aligned}
	\end{equation}
	We claim that $S_1$ is the leading term. To see this, let $M(\zeta_0)$ and $N(\zeta_0)$ be two positive integers such
	that the following inequalities holds for all $M \geqslant M(\zeta_0)$ and $N \geqslant N(\zeta_0)$,
	\begin{equation}
		\label{eq:3:21}
		\begin{aligned}
			\frac{e^{4Mb\sprod{\zeta_0}{\rho}}}{M} 
			&\geqslant \frac{32 \abs{W}}{\abs{W(\zeta_0}},\\
			e^{4Mb\kappa}&\geqslant\frac{\kappa'}{\abs{W(\zeta_0)}},\\
			e^{N\sprod{\zeta_0}{\rho}}&\geqslant 9
		\end{aligned}
	\end{equation}
	where $\kappa$ is defined in \eqref{eq:3:17}, and
	\[
		\kappa' = \sum_{\stackrel{w \in W \setminus W(\zeta_0)}{\sprod{\zeta_0}{w . \rho} > 0}}
		\frac{1}{1 - e^{-\sprod{\zeta_0}{w.\rho}}} > 0.
	\]
	The estimate for $S_5$ is straightforward:
	\[
		\abs{S_5}\leqslant\sum_{k=1}^{4M}
		\sum_{\stackrel{w\in W\setminus W(\zeta_0)}{\sprod{\zeta_0}{w.\rho}\leqslant 0}}
		e^{kb\sprod{\zeta_0}{w.\rho}}\leqslant4M\abs{W}\leqslant \frac{S_1}8.
	\]
	To bound $S_4$ we proceed as follows:
	\begin{align*}
		\abs{S_4} 
		\leqslant \sum_{\stackrel{w \in W \setminus W(\zeta_0)}{\sprod{\zeta_0}{w.\rho} > 0}}
		\sum_{k=1}^{4M}e^{kb\sprod{\zeta_0}{w.\rho}} 
		&=\sum_{\stackrel{w \in W \setminus W(\zeta_0)}{\sprod{\zeta_0}{w.\rho} > 0}}
		\frac{e^{4Mb \sprod{\zeta_0}{w . \rho}} -1}{1 - e^{-b \sprod{\zeta_0}{w . \rho}}} \\
		&\leqslant 
		e^{4Mb (\sprod{\zeta_0}{\rho} - \kappa)} \kappa' \leqslant\frac{S_1}8.
	\end{align*}
	For $S_3$ we have
	\begin{align*}
		\abs{S_3} 
		\leqslant 
		\abs{W(\zeta_0)} \sum_{k = 1}^{4M-1} e^{kb \sprod{\zeta_0}{\rho}} 
		&=\abs{W(\zeta_0)}
		\frac{e^{4Mb \sprod{\zeta_0}{\rho}}-e^{b\sprod{\zeta_0}{\rho}}}{e^{b\sprod{\zeta_0}{\rho}}-1} \\
		&\leqslant{\frac1{e^{b\sprod{\zeta_0}{\rho}}-1}} S_1,
	\end{align*}
	which is smaller than $\frac{S_1}8$ if $b\geqslant N$.

	To control $S_2$, we show that there is a positive integer $b \geqslant N$, such that
	\begin{equation}
		\label{eq:3:22}
		\sum_{w \in W(\zeta_0)} \big(\sin 2 Mb \sprod{\theta_0}{w . \rho} \big)^2 
		\leqslant \frac{\abs{W(\zeta_0)}}{16}.
	\end{equation}
	For this purpose we resume the arguments in Case 2 based on the Dirichlet approximation theorem. Let 
	\[
		B = 8 \pi h M N.
	\]
	Then there is $1 \leqslant b' \leqslant B^r$, such that
	\[
		b' \sprod{\theta_0}{\spcheck{\alpha_j}} 
		\in
		\Big[0, \frac{2\pi}{B} \Big) + 2 \pi \ZZ
		\quad\text{ for all } j \in \{1, 2, \ldots, r\}.
	\]
	Hence,
	\[
		2 b' \sprod{\theta_0}{w . \rho} \in \Big(-\frac{2 \pi h}{B}, \frac{2\pi h}{B} \Big) + 2\pi \ZZ
		\quad
		\text{ for all } w \in W.
	\]
	We again set $b = 2 b' \lceil N/(2b') \rceil$. Then
	\[
		N \leqslant b \leqslant \min\big\{2 b' N, 2 B^r \big\},
	\]
	and
	\begin{align*}
		2 M b \sprod{\theta_0}{w . \rho} \in \bigg(-\frac{1}{4}, \frac{1}{4}\bigg) + 2 \pi \ZZ
		\quad
		\text{ for all } w \in W,
	\end{align*}
	which implies \eqref{eq:3:22}.

	Combining all the estimates on $S_j$, we deduce that
	\[
		\sum_{k=1}^{4M}m_{k\rho}(bz_0)\geqslant1+\frac12S_1\geqslant1+ 4 M\abs{W}.
	\]
	Hence, $K_{M,b}(z_0) \geqslant 4 \abs{W} M \geqslant M$. This completes the proof of \eqref{eq:3:12}, and the
	theorem follows.
\end{proof}

\section{Applications}
\label{s - applications}
In what follows, we describe some potentially new situations where arithmetic quantum unique ergodicity (AQUE) could be proved, 
using the kernel constructed in Section \ref{s - kernel construction} and taking advantage of the fact that we are no more 
limited by the type of the underlying (reduced) root system. This would require additional progress in various steps of
the known strategies for AQUE, and more particularly in the strategy elaborated by Brooks and Lindenstrauss (for more details,
see the introduction). Nevertheless, an already available result is a positive entropy theorem as proved by Shem-Tov in  \cite{Shem-Tov2022}.

\begin{theorem}
	\label{th - positive entropy}
	Let $G$ and $\Gamma$ be as in Theorem \ref{th - positive entropy intro} of the introduction.
	Assume that $G$ is $\RR$-split. 
	Let $\mu$ be an $A$-invariant probability measure on the homogeneous space $X = \Gamma \backslash G$. Assume that
	$\mu=c \lim_{j\to\infty}|\varphi_j|^2 \: {\rm dvol_{\Gamma \backslash G}}$ is a weak-$\star$ limit where $c>0$ and 
	$\varphi_j$ are some $L^2$-normalized joint eigenfunctions of both $\mathscr{Z}(\mathfrak{g})$ and of the Hecke algebra 
	at some fixed prime $p$. Then any regular element $a\in A$ acts with positive entropy on each ergodic component of $\mu$.
\end{theorem}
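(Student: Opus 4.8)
The plan is to deduce the theorem from the positive-entropy argument of Shem-Tov \cite{Shem-Tov2022}, substituting his ${\rm SL}_n$-kernel with the kernel $k$ furnished by Theorem \ref{thm:3:1}, which is now available for an arbitrary split group. First I would put the problem in the $S$-arithmetic form recalled in the introduction: writing $G={\bf G}(\RR)$ with ${\bf G}$ absolutely almost simple over $\QQ$ and, after the harmless modification of ${\bf G}$ explained there, split over $\QQ_p$, one identifies $\Gamma\backslash G$ with $\Gamma[1/p]\backslash(G_\infty\times G_p)/K_p$ in such a way that the Hecke algebra at $p$ becomes the algebra $\mathscr{A}_0$ of averaging operators on the Bruhat--Tits building $\scrX$ of $G_p={\bf G}(\QQ_p)$. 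Since ${\bf G}$ is $\QQ_p$-split, $\scrX$ has reduced type and Theorem \ref{thm:3:1} applies. A joint eigenfunction $\varphi_j$ of $\mathscr{Z}(\mathfrak{g})$ and of $\mathscr{A}_0$ carries a spectral parameter $z_j\in\Sigma$ with $A_\lambda\varphi_j=P_\lambda(z_j)\varphi_j$ for all $\lambda\in P^+$; after passing to a subsequence, which changes neither $\mu$ nor the hypotheses, we may assume $z_j\to z_0$ in $\Sigma$, and it is for this $z_0$ that we build the kernel $k$.

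Next I would record how $k$ acts on the $\varphi_j$. By \eqref{en:2:1} and the inversion formula \eqref{eq:1:3}, $k(o,\cdot)=\mathcal{K}\delta_o$ for an operator $\mathcal{K}\in\mathscr{A}_0$, namely a finite linear combination of the averaging operators $A_\lambda$ since $k$ has finite support; as $\varphi_j$ is an $\mathscr{A}_0$-eigenfunction with character $h_{z_j}$, this gives $\mathcal{K}\varphi_j=\calF\!\bigl(k(o,\cdot)\bigr)(z_j)\,\varphi_j$. Property \eqref{en:2:3} yields $\calF\!\bigl(k(o,\cdot)\bigr)(z_j)\geqslant-1$ for all $j$, while $z\mapsto\calF\!\bigl(k(o,\cdot)\bigr)(z)$ is continuous on $\Sigma$ and is $\geqslant M$ at $z_0$, so the eigenvalue of $\mathcal{K}$ on $\varphi_j$ exceeds $M/2$ once $j$ is large. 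The remaining two properties of $k$ are precisely the geometric inputs of the Brooks--Lindenstrauss method: by \eqref{en:2:1} the function $k(x,\cdot)$ is supported in a ball of radius $\calO(M^{r+1}N^r)$ in the building metric (finite propagation speed), and by \eqref{en:2:2} one has $\sup|k|\leqslant Ce^{-cN}$, the number of vertices in such a ball being governed by the cardinalities $N_\lambda$ of \eqref{eq:1:1}. These three quantitative features will be balanced exactly as in \cite{BrooksLindenstrauss2013, Shem-Tov2022}, by first fixing $M$ large --- above a threshold depending on $\zeta_0$ and on the constants in Shem-Tov's lemmas --- and then $N$ large depending on $M$.

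With these facts in hand the dynamical part is that of \cite{Shem-Tov2022}: if some regular $a\in A$ acted with zero entropy on a set of ergodic components of $\mu=c\lim_j|\varphi_j|^2\,{\rm d}x$ of positive measure, a dynamical argument of Bowen-ball / conditional-measure type would confine the $a$-orbit of a generic point to a union of building tubes of arbitrarily small total $\mu$-mass, whereas evaluating $\mathcal{K}\varphi_j$ --- whose eigenvalue is $\geqslant M/2$ --- and using that $\mu$ is the weak-$\star$ limit of $|\varphi_j|^2\,{\rm d}x$ forces that mass to be bounded below by a positive constant depending only on $M$, a contradiction once $M$ is large. Carrying this out for every regular $a\in A$ --- equivalently, since the unstable horospherical subgroup is constant along a Weyl chamber, for one representative per chamber --- gives a uniform positive lower bound on the entropy of almost every ergodic component, which is the assertion. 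The step that requires real work is not any new inequality but the verification that each lemma used in the proof of \cite[Theorem~1.1]{Shem-Tov2022} invokes the kernel only through properties \eqref{en:2:1}--\eqref{en:2:3}, the dependence on $M$, $N$, $r$ and on $\Phi$ entering solely through harmless constants and through the growth of the $N_\lambda$; one has in particular to check that the propagation radius $\calO(M^{r+1}N^r)$, slightly different from the one in \cite{Shem-Tov2022}, still lies in the regime where his counting arguments apply, and that replacing the ${\rm SL}_n$ adelic picture by a general split ${\bf G}$ --- strong approximation, so that $G_p=\Gamma[1/p]K_p$, and the identification of Hecke eigenvalues with Satake parameters in $\Sigma$ --- introduces no new difficulty. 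These are bookkeeping matters, which is why Shem-Tov's argument transfers unchanged.
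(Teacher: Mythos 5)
Your proposal follows the same route as the paper: both deduce the theorem by substituting the kernel of Theorem~\ref{thm:3:1} into Shem-Tov's positive-entropy argument and verifying that the lemmas of \cite{Shem-Tov2022} transfer from ${\rm SL}_n$ to a general split simple group. The paper's proof is a compressed lemma-by-lemma check (of Shem-Tov's Lemma 4.1, Proposition 4.6, Lemma 4.7 and the auxiliary Lemmas 4.2--4.5), while your version adds a conceptual gloss on how properties \eqref{en:2:1}--\eqref{en:2:3} and the convergence of spectral parameters enter, but the substance and structure of the argument are the same.
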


This statement is merely a generalization of \cite[Theorem 1.1]{Shem-Tov2022}, passing from ${\rm SL}_n(\RR)$ to any split
simple $\RR$-group. Its proof consists in checking that Shem-Tov's arguments in \cite[Section 4]{Shem-Tov2022} can be used,
once we replace his kernels constructed in \cite[Section 3.1]{Shem-Tov2022} by our kernels from Theorem \ref{thm:3:1}. 
Before going into this specific result, we first describe the general standard context which will allow us to address some 
additional potential applications. 

\subsection{The general setting}
\label{ss - general setting}
In what follows, $\mathbf{H}$ denotes a linear algebraic group defined over a number field $F$ which is assumed to be absolutely 
almost simple and simply connected. We denote by $\Phi$ the (absolute) root system of $\mathbf{H}_{\overline F}$ where
$\overline F$ is an algebraic closure of $F$. The root system $\Phi$ is reduced since $\mathbf{H}_{\overline F}$ is split; 
its Dynkin diagram is connected since $\mathbf{H}_{\overline F}$ is almost simple. 

The starting point is relevant to real homogeneous spaces. Let $H$ be a non-compact simple linear real Lie group. 
Its associated Riemannian symmetric space is $\widetilde M = H/K$ where $K$ is any maximal compact subgroup of $H$. 
The desired quantum unique ergodicity results are expected to hold on the homogeneous space $\Gamma \backslash H$ where
$\Gamma$ is a torsion-free lattice, and have geometric interpretations on the locally symmetric space
$M = \Gamma \backslash H /K$.

Our goal is to explain how to find an $F$-group ${\bf H}$ as above in order to see $H$ as closely related to the $F$-rational
points of ${\bf H}$ for some Archimedean completion of $F$, and also how to construct $S$-arithmetic groups projecting onto
lattices $\Gamma$ in $H$. We will also discuss the issue of cocompactness of $\Gamma$ and explain why, in view of the
classification of non-Archimedean Lie groups, this condition either implies restrictions on the absolute root system $\Phi$ 
or implies to work with several Archimedean places. A convenient reference for all the material below is the first chapter
in Margulis' book \cite{Margulis91}. 

First, if $k$ is a locally compact field containing $F$, it is well-known (see \cite[Chapter I, Proposition 2.3.6]{Margulis91})
that we have the following result, relating an algebraic property for an algebraic group and a topological one for its rational
points in this case. 

\begin{namedthm*}{Theorem}[Anisotropy is compactness]
	The topological group of rational points ${\bf H}(k)$ is compact if and only if the algebraic group ${\bf H}$ has $k$-rank
	$0$.
\end{namedthm*}

Recall that, for any field $E$ containing the ground field $F$, the $E$-rank of ${\bf H}$, denoted by ${\rm rk}_E({\bf H})$, is the dimension of a maximal $E$-split torus in ${\bf H}$; these tori are all conjugate by ${\bf H}(E)$. 

We denote by $\mathscr{T}({\bf H})$ the set of places $v$ of $F$ such that the group ${\bf H}(F_v)$ is compact where $F_v$
is the completion of $F$ with respect to $v$. In other words, $\mathscr{T}({\bf H})$ is the set of places $v$ for which ${\bf H}$
is $F_v$-anisotropic: It is known to be a finite subset of the set $\mathscr{R}$ of all places of $F$. Finally, we denote by
$\mathscr{R}_\infty$ (resp. $\mathscr{R}_{\rm fin}$) the set of Archimedean (resp. non-Archimedean) places of $F$.

For a finite subset $S$ in $\mathscr{R}$, we denote by $F(S)$ the set of $S$-integral elements in $F$, \emph{i.e.} 
the set of elements $x \in F$ such that $|x|_v \leqslant 1$ for each absolute value $| \cdot |_v$ attached to 
$v\in\mathscr{R}_{\rm fin}\smallsetminus S$. 

From now on, we assume that the group ${\bf H}$ is given together with an embedding in some general linear group, which we denote 
by ${\bf H} < {\rm GL}_n$. This allows us to define the subgroup ${\bf H} \bigl( F(S) \bigr)$ to be 
\[
	{\bf H} \bigl( F(S) \bigr) = {\bf H}(F) \cap {\rm GL}_n \bigl( F(S) \bigr). 
\]
This is well-defined up to commensurability in the sense that another embedding ${\bf H} < {\rm GL}_m$ would lead to a 
commensurable subgroup ${\bf H}(F) \cap {\rm GL}_m \bigl( F(S) \bigr)$, \emph{i.e.} a subgroup containing a subgroup of finite 
index in both ${\bf H}(F) \cap {\rm GL}_n \bigl( F(S) \bigr)$ and ${\bf H}(F) \cap {\rm GL}_m \bigl( F(S) \bigr)$.
Any subgroup of the form ${\bf H} \bigl( F(S) \bigr)$ (or, more generally, commensurable with such a subgroup) is called an 
{\it $S$-arithmetic subgroup}~of ${\bf H}(F)$. 

The key result to construct lattices in products of Lie groups is the following theorem, due to Borel--Harish-Chandra and 
Mostow--Tamagawa (see \cite[Chapter I, Theorem 3.2.5]{Margulis91}). 

\begin{namedthm*}{Theorem}[Arithmetic groups are lattices]
	For any finite set $S$ of places of $F$ such that $\mathscr{R}_\infty \smallsetminus\mathscr{T}({\bf H}) \subset S 
	\subset \mathscr{R}$, the $S$-arithmetic group ${\bf H} \bigl( F(S) \bigr)$ is a lattice in the topological group $H_S$ 
	defined to be product group $H_S=\prod_{v\in S}{\bf H}(F_v)$.
\end{namedthm*}
In the next section, we shall see how, from the above statements, one can construct homogeneous spaces $\Gamma\backslash G$ 
when $G$ is an Archimedean Lie group. In addition, this result is complemented by Godement's compactness criterion, 
see \cite[Chapter I, Theorem 3.2.4 (b)]{Margulis91}

\begin{namedthm*}{Theorem}[Cocompactness amounts to anisotropy]
	The $S$-arithmetic lattice ${\bf H} \bigl( F(S) \bigr)$ in the topological group $H_S$ is cocompact if and only if 
	the algebraic group ${\bf H}$ is $F$-anisotropic.
\end{namedthm*}

It is clear that for ${\bf H}$ to be $F$-anisotropic, it is sufficient that ${\bf H}$ be $F_v$-anisotropic, hence equivalently 
that ${\bf H}(F_v)$ be compact, for some place $v$ of $F$. With this remark in mind, since we are mainly targeting a suitable
(cocompact) lattice inclusion $\Gamma < H$, a very standard trick consists then in finding an $F$-group ${\bf H}$ as above 
such that ${\bf H}(\RR)$ is $H$ for some Archimedean completion $\RR$ of $F$, and such that ${\bf H}(F_v)$ is compact for 
some other completion $F_v$, with $v$ Archimedean or not. One limitation of this trick is the fact that there are very few
compact non-Archimedean simple Lie groups, namely (see \emph{e.g.} \cite[Classification tables]{TitsCorvallis79}): 

\begin{namedthm*}{Theorem}[Very few compact non-Archimedean groups]
	The only anisotropic simple groups over non-Archimedean local fields are the groups of inner type $A$, in other words 
	division algebras over non-Archimedean local fields.
\end{namedthm*}

This classification result explains in particular why quaternion algebras, which are fields over $\QQ$, are used to construct
cocompact arithmetic lattices in ${\rm PSL}_2(\RR)$. On the contrary, according to Weyl's unitarian trick
\cite[\S~3 Th\'eor\`eme 1 p. 19]{BourbakiLie9}, any simple real Lie group $H$ has a (unique) anisotropic $\RR$-form which we 
denote by $\mathbf{H}^{\rm cpt}$. Recall that an {\it $F$-form}~of ${\bf H}$ is a linear algebraic $F$-group $\mathscr{H}$
such that ${\bf H} \otimes_F \overline F$ and $\mathscr{H} \otimes_F \overline F$ are isomorphic $\overline F$-groups. 

The last important result we would like to mention in this abstract general context addresses the freedom of choice that is 
available when one wants to find an $F$-group as above with prescribed behavior at finitely many places of $F$. 
The following statement is implied by a deep result in Galois cohomology due to Borel and Harder 
(see \cite[Theorem B]{BorelHarder78}). 

\begin{namedthm*}{Theorem}[Freedom in prescribing local forms]
	Let $\Phi$ be an irreducible reduced root system. Let $S$ be a finite set of places of $F$ and, for each $v \in S$, and 
	let ${\bf H}_v$ be a simply connected $F_v$-group of absolute root system $\Phi$. Then there exists a simply connected 
	$F$-group ${\bf H}$ such that ${\bf H} \otimes_F F_v$ and  ${\bf H}_v$ are isomorphic $F_v$-groups for each $v \in S$. 
\end{namedthm*}

The same statement holds with "adjoint" replacing "simply connected" everywhere. It is precious in the sense that, provided a 
natural compatibility condition is satisfied (namely, sharing the same absolute root system) we can find a suitable $F$-group 
${\bf H}$ in order to construct an $S$-arithmetic lattice ${\bf H} \bigl( F(S) \bigr)$ in $H_S$ as above; moreover, this can
be done while prescribing that at least one factor for some $v \in S$ in $H_S$ be compact, ensuring that the lattice inclusion
${\bf H} \bigl( F(S) \bigr) < H_S$ is cocompact. To reformulate the limitation mentioned after the previous theorem, one 
restriction is that if we choose a compact factor to be non-Archimedean, then the group $F$-group ${\bf H}$ must be an $F$-form
of ${\rm SL}_n$; otherwise, if we choose the compact factor to be Archimedean, then all absolute root systems are allowed but
then $F$ must be a number field $\neq \QQ$ since several Archimedean places are needed. 

\subsection{Potentially new situations for AQUE}
\label{ss - potential applications} 
Using the theoretical framework of the previous subsection, we recall situations when AQUE, or at least positive entropy 
statements, are known to hold. Then we exhibit situations where the kernels from Theorem \ref{thm:3:1} could be used to prove 
new positive entropy results. We freely use here the notation used in the introduction as well as the one of the previous 
subsection. 

We first consider the case when adelic Hecke conditions are imposed, \emph{i.e.} when the functions $\varphi_j$ are 
eigenfunctions of the full Hecke algebra $\mathscr{H}$ (and form a non-degenerate sequence in the sense of \cite{SV07}).
Then it is known that AQUE holds for congruence cocompact lattices in $G_\infty = {\rm SL}_2(\RR)$, see \cite{Lindenstrauss},
and more generally in $G_\infty = {\rm SL}_d(\RR)$ for $d$ prime \cite{SV19}. In the same adelic context, AQUE is also known 
to hold in the non-cocompact case for $G_\infty = {\rm SL}_2(\RR)$, since in \cite{Soundararajan} it is shown that there is
no escape of mass.  

We now turn to the case when local Hecke conditions are imposed, \emph{i.e.} when $(\varphi_j : j\in\NN)$ is a non-degenerate 
sequence of eigenfunctions of a local Hecke algebra $\mathscr{H}_p$ with respect to some specific prime $p$. Then AQUE holds 
for congruence cocompact lattices in $G_\infty = {\rm SL}_2(\RR)$, and cocompact irreducible lattices in
$G_\infty = {\rm SL}_2(\RR) \times {\rm SL}_2(\RR)$, see \cite{BrooksLindenstrauss2010}, and more generally for congruence
cocompact lattices in $G_\infty = {\rm SL}_d(\RR)$ for $d$ prime, see \cite{SilbermanPhD}. In the same local context, AQUE
is also known to hold in the non-cocompact case (up to proportionality, since then escape of mass is not disproved yet)
for $\Gamma =  {\rm SL}_d(\ZZ)$ in $G_\infty = {\rm SL}_d(\RR)$ for $d$ prime, see \cite{Shem-Tov2022}. The latter reference 
contains a positive entropy result elaborating on ideas from \cite{BrooksLindenstrauss2010}. We mention below that thanks to
our kernels and by using Shem-Tov's ideas on ${\rm SL}_k({\ZZ}) \setminus {\rm SL}_k(\RR)$ (where $k$ is any integer 
$\geqslant 2$), the positive entropy result \cite[Theorem 1.1]{Shem-Tov2022} can be generalized from the case 
$G_\infty = {\rm SL}_k(\RR)$ to the case when $G_\infty$ is any split simple real Lie group.

As it was already mentioned in the introduction, deducing an AQUE result from a positive entropy one requires a precise
understanding of measures on the homogeneous spaces $\Gamma\backslash G_\infty$. In the higher rank case, the results in this 
vein that are used in the references above usually come from \cite{EKL} and \cite{EK}. 

Finally, let us explain in which general context our kernels shall be used. Let $G_\infty$ be a non-compact simple linear
real Lie group whose Lie algebra $\mathfrak{g}_\infty$ is absolutely simple (\emph{i.e.} $\mathfrak{g}_\infty \otimes_\RR \CC$ 
is simple). We assume that $G_\infty$ is the group of real points of an absolutely almost simple simply connected $\RR$-group 
${\bf G}$. 
\begin{itemize}[leftmargin=1em]
\item[$\bullet$]
As a first step, by \cite[Proposition 3.8]{Borel63} and after choosing a totally real number field $E$, we can extend 
${\bf G}$ to an $E$-group, still denoted by ${\bf G}$, with ${\bf G}(E) = G_\infty$, and such that the twisted groups
${\bf G}^\sigma(E)$ are compact for all non-trivial embeddings $\sigma : E \to \RR$. Then the group
$\Gamma = {\bf G}(\mathscr{O}_E)$ is a cocompact lattice in $G_\infty \times K_\infty$ where $K_\infty$ is the product of 
the compact Lie groups ${\bf G}^\sigma(E)$. 

\item[$\bullet$]
As a second step, by the Borel-Harder Theorem above and after choosing a non-Archimedean place 
$\frakp \in \mathscr{R}_{\rm fin}$, we can even assume that ${\bf G}$ is split at the completion $E_\frakp$ 
of $E$ with respect to the place $\frakp$. In particular, the Bruhat--Tits building of 
$G_\frakp = {\bf G}(E_\frakp)$ carries a Brooks--Lindenstrauss kernel as constructed in the present paper, since its 
(spherical) root system is reduced. 
\end{itemize}
Setting $S = \mathscr{R}_\infty \cup \{ \frakp \}$ we see that that the $S$-integers consist of the elements in 
$\mathscr{O}_E[\frac1{\frakp}]$ where $\mathscr{O}_E$ is the ring of integers of the number field $E$. Using the notation 
$\Gamma[\frac1{\frakp}] = {\bf G}(\mathscr{O}_E[\frac1{\frakp}])$,
and $K_\frakp={\bf G}(\mathscr{O}_\frakp)$ where $\mathscr{O}_\frakp$ is the valuation ring of the local field
$E_\frakp$, we have $\Gamma = \Gamma[\frac1{\frakp}] \cap K_\frakp$. In addition, the maximal compact subgroup
$K_\frakp$ is actually a maximal proper subgroup since it is a maximal parahoric subgroup of an affine Tits system in 
the simply connected Chevalley group ${\bf G}(E_\frakp)$. This provides the equality ${\bf G}(E_\frakp) 
= \Gamma[\frac1{\frakp}] K_\frakp$ which together with $\Gamma = \Gamma[\frac1{\frakp}] \cap K_\frakp$
allows us to identify (in a similar way as in the Introduction after Theorem \ref{th - positive entropy intro}) the spaces
$\Gamma \backslash (G_\infty \times K_\infty)$ and $\Gamma[\frac1{\frakp}] \backslash
(G_\infty \times K_\infty \times G_\frakp ) / K_\frakp$. Note that not insisting on cocompactness of $\Gamma$ in 
$G_\infty$ allows one to skip the first step and to work with a $\QQ$-group ${\bf G}$, in which case $\frakp$
is merely a prime number $p$. 

We finally use the identification $\Gamma \backslash G_\infty \simeq \Gamma[\frac1{\frakp}] \backslash 
(G_\infty \times G_\frakp ) / K_\frakp$ to see the averaging operators $A_\lambda$ defined in Section 
\ref{ss - analysis on buildings} as Hecke operators acting on $L^2(\Gamma \backslash G_\infty)$. More precisely, at the 
non-Archimedean place $\mathfrak p$ the Cartan decomposition 
$G_{\frakp} = \bigsqcup_{\lambda \in P^+} K_\frakp \lambda K_\frakp$ shows that the characteristic functions 
$\ind{K_\frakp \lambda K_\frakp}$ provide a basis for the space $\calL(G_{\frakp}, K_{\frakp})$ of 
compactly supported bi-$K_\frakp$-invariant functions $f : G_\frakp \to \CC$; the latter space is a unital, 
commutative algebra for convolution (we pick the Haar measure on $G_\frakp$ for which $K_\frakp$ has volume 1). 
It follows from a classical computation that for each $\lambda \in P^+$ the map
$f \mapsto f * \ind{K_\frakp \lambda K_\frakp}$ coincides with the averaging operator $A_\lambda$ on continuous
functions $G_\frakp \to \CC$ which are right $K_\frakp$-invariant. We will use this remark to write 
$A_\lambda \in \calL(G_{\frakp}, K_{\frakp})$. 

To go back to the quotient $\Gamma \backslash G_\infty$, we will call the operator on $L^2(\Gamma \backslash G_\infty)$ 
defined by the partial convolution as 
\[
	f \mapsto \int_{K_\frakp \lambda K_\frakp} f(g_\infty,g_\frakp h^{-1}) \: {\rm d}h,
\]
the \emph{Hecke operator} attached to $\lambda \in P^+$. 
We will denote it again by $A_\lambda$. We also write $A_\lambda \in \calL(G_{\frakp}, K_{\frakp})$ in this context.
The connection between eigenvalues of the Hecke algebra and the spectrum $\Sigma$ is explained in Appendix \ref{app - 1}.

\subsection{Proof of the positive entropy theorem for all $\RR$-split simple groups}
\label{subsection - pf ThB}
We can now turn to an effective application of the kernels constructed in Theorem \ref{thm:3:1}.
We use the notation of Theorem \ref{th - positive entropy} with $G$ equal to $G_\infty$ above; in particular, $\mu=c \lim_{j\to\infty}|\varphi_j|^2 \: {\rm dvol_{\Gamma \backslash G}}$ is a weak-$\star$ limit where $c>0$ and $a$ is a regular element in a maximal $\RR$-split torus $A$. 
We denote by $\pi_\Gamma$ the quotient map $G_\infty \twoheadrightarrow \Gamma\backslash G_\infty = X$. 
At last, we pick an embedding ${\bf G} < {\rm SL}_m$ defined over $\QQ$ so that $G_p = {\bf G} \cap {\rm SL}_m(\QQ_p)$ and
$K_p = {\bf G} \cap {\rm SL}_m(\ZZ_p)$. 
This allows us to define the denominator function ${\bf d}: G_p \to p^\ZZ \cup\{0\}$ by setting ${\bf d}(g)$ to be the maximum of the $p$-adic absolute values of the matrix coefficients of $g$. Notice that ${\bf d}$ is a bi-$K_p$-invariant function on $G_p$. 

In what follows, we basically check that Shem-Tov's proof of \cite[Theorem 1.1]{Shem-Tov2022} works in the context of this section, once one uses our multitemporal wave kernels. 


\begin{proof}
Let $N$ be an integer $\gg 1$. 
We use the positive entropy criterion provided by \cite[Proposition 4.6]{Shem-Tov2022}, which is relevant to pure dynamical systems with an abstract ambient space
$X$ endowed with a probability measure $\mu$ stabilized by a measurable automorphism $a$ (\emph{i.e.} $a_*\mu=\mu$). 
Hence we have to show that the following condition is satisfied: 

\vspace{1ex}
\noindent $(\star)$ 
{\it for any $\eta \in (0;1)$ there exists a partition $\mathcal{P}$ of $X$, an integer $c \in \NN^*$ and $\delta >0$ such that for any subset $J$ of the refined partition $\mathcal{P}^{\vee_acN}$, if $\mu(\bigsqcup_{E \in J}E) \geqslant \eta$ then $|J| \geqslant e^{\delta N}$.}

\vspace{1ex}
This is done in two steps:
\begin{itemize}
    \item[$\bullet$] construction of suitable partitions $\mathcal{P}$ as above;
    \item[$\bullet$] proof of the implication in $(\star)$. 
\end{itemize}
The second step is achieved by analyzing correlations $\langle (\varphi_j \ind{E}) * k , \varphi_j \ind{E} \rangle$, where $k$ is a kernel provided by our Theorem  \ref{thm:3:1} and $E$ belongs to a suitable family of subsets in $X$. 
Roughly speaking, checking the implication in $(\star)$ is done by comparing a lower bound of $\langle (\varphi_j \ind{E}) * k , \varphi_j \ind{E} \rangle$ obtained by spectral properties of $k$ with an upper bound obtained by more computational arguments involving the partition $\mathcal{P}$. 

\vspace{1ex}
\noindent
{\it Construction of suitable partitions}. 
They are given by \cite[Lemma 4.1]{Shem-Tov2022}: 

\begin{lemma} 
\label{lemma-partition}
Let $A$ be a maximal $\RR$-split torus in $G_\infty$ and let $a \in A$ be a regular element. 
Let $\mu$ be an $a$-invariant probability measure on $X$. 
Then for any compact neighbourhood $\Omega_\infty$ of the identity element in $G_\infty$ and for any $r>0$, there exist a sequence
$(B_N : N \in \NN^*)$ of neighbourhoods of the identity element in $G_\infty$, a partition $\mathcal{P}$ of $X$ and an integer $c \in\NN^*$ such that:
\begin{enumerate}[label=\rm (\roman*), start=1, ref=\roman*]
    \item
    \label{en:4:1}
    the intersection with $\pi_\Gamma(\Omega_\infty)$ of any element of the refined partition $\mathcal{P}^{\vee_acN}$ is $\mu$-
    essentially contained in a translate $\pi_\Gamma(xB_N)$ for some $x \in \Omega_\infty$; 
    \item
    \label{en:4:2}
    for any $x,y \in \Omega_\infty$ the number of cosets $bK_p$ in $G_p$ such that $\pi_\Gamma(xB_Nb) \cap \pi_\Gamma(yB_N) \neq \varnothing$ and ${\bf d}(b) \leqslant p^{rN}$ is bounded from above by $N^{O_r(1)}$.
\end{enumerate}
\end{lemma}

\begin{proof}[Proof of Lemma \ref{lemma-partition}]~The ideas are all due to Shem-Tov (see \cite[Section 4]{Shem-Tov2022}) who considered the case of special linear groups but used results valid for more general simple $\RR$-groups; nevertheless, we will point out when $\RR$-splitness of ${\bf G}$ and regularity of $a$ are used. 
We denote by $B(\eta)$ the ball of radius $\eta$ centered at the identity element for an invariant Riemannian metric on $G_\infty$; accordingly, for a subset $C$ in $G_\infty$ we denote by $B(C,\eta)$ its $\eta$-thickening in $G_\infty$.

We pick $\delta>0$. 
By a construction due to Einsiedler-Lindenstrauss \cite[7.51]{EL}, established for simple real Lie groups, there exists a finite entropy partition $\mathcal{P}$ of $X$ containing $X \setminus \pi_\Gamma(\Omega_\infty)$ and such that for any element $E$ of $\mathcal{P}^{\vee_acN}$ with $E \subseteq \pi_\Gamma(\Omega_\infty)$ there exists $x \in \Omega_\infty$ such that $E \subseteq \pi_\Gamma\bigl( x \bigcap_{k=-N}^N a^k B(\delta) a^{-k} \bigr)$, see also \cite[Lemma 4.3]{Shem-Tov2022}. 
Then we use \cite[Lemma 4.4]{Shem-Tov2022}: the proof of this lemma is valid for all simple real Lie groups since it only uses the metric properties of the exponential map, together with the fact that $G$ is $\RR$-split and $a$ is regular, so that
$\mathfrak{g} =  \mathfrak{a} \oplus \mathfrak{g}^+ \oplus \mathfrak{g}$, where $\mathfrak{g}$ and $\mathfrak{a}$ are the Lie algebras of $G_\infty$ and $A$ respectively, and $\mathfrak{g}^\pm$ is the Lie algebra of the contraction group $G^\pm = \{g \in G_\infty : a^kga^{-k} \to 1$ as $k \to \pm\infty\}$ (concretely, the last two groups are the unipotent radicals of the opposite positive and negative Borel subgroups containing $A$ and defined by the choice of $a$). 
Applying the latter lemma allows us to choose $\delta>0$ small enough to find $\alpha>0$ and $\kappa\ll\delta$ such that $\bigcap_{k=-N}^N a^k B(\delta) a^{-k} \subseteq B(C,\kappa e^{-\alpha N})$, where $C$ is a compact subset of $A$.

At this stage, we found a partition $\mathcal{P}$ and $\alpha >0$ such that for any $c \in \NN^*$ and any element $E$ of $\mathcal{P}^{\vee_acN}$ contained in $\Omega_\infty$ we have $E \subset \pi_\Gamma \bigl( xB(C,\kappa e^{-\alpha N}) \bigr)$. 
We set $\varepsilon_N = \kappa e^{-\alpha N}$ and $B_N = B(C,\varepsilon_N)$. 
It remains to show that we may take a possibly smaller $\delta$ and we may adjust $c$ (hence $\varepsilon_N$ and $B_N)$ in order to obtain \eqref{en:4:2}. 
In other words, it remains to count the cosets $bK_p$ such that $\pi_\Gamma(xB_Nb) \cap \pi_\Gamma(yB_N) \neq \varnothing$ and
${\bf d}(bK_p) \leqslant p^{rN}$. 
In view of the identification $\Gamma \backslash G_\infty \simeq \Gamma[\frac1{p}] \backslash (G_\infty \times G_p ) / K_p$, having $\pi_\Gamma(xB_Nb) \cap \pi_\Gamma(yB_N) \neq \varnothing$ amounts to the existence of an element $\gamma \in \Gamma[{1 \over p}]$ such that $\gamma x B_Nb \cap yB_N \neq \varnothing$. 
Looking at the non-Archimedean component of a couple in the latter intersection, we see that $\gamma b \in K_p$ so that ${\bf d}(\gamma) \leqslant p^{O(N)}$. 
Now we enumerate the classes $bK_p$ such that $\pi_\Gamma(xB_Nb) \cap \pi_\Gamma(yB_N) \neq \varnothing$: this gives a list $(b_1K_p, b_2K_p, \dots, b_MK_p)$, and a list $(\gamma_i : i=1\ldots M)$ of corresponding elements in $\Gamma[{1 \over p}]$; finally we set $s_i = \gamma_i \gamma_1^{-1}$ for $i\in\{2;\dots;M\}$:
it is a finite family of elements in $yB_NB_N^{-1}B_NB_N^{-1}y^{-1}\cap \Gamma[{1 \over p}]$. 
It remains to use the counting provided by \cite[Lemma 4.5]{Shem-Tov2022} to obtain the conclusion.
\end{proof}

Let us next estimate correlations $\langle (\varphi_j \ind{E}) * k , \varphi_j \ind{E} \rangle$. 
Recall that $(\varphi_j : j\in\NN)$ is a non-degenerate sequence of normalized $L^2$-eigenfunctions of the local Hecke algebra $\mathscr{H}_p$ and that we consider by assumption a weak-$\star$ limit $\mu=c\lim_{j\to\infty}\mu_i$, where $\mu_i = |\varphi_j|^2 \: {\rm dvol_{\Gamma \backslash G}}$ and $c \geqslant 1$ is a constant such that $\mu$ is probability measure. 
We want to use the positive entropy criterion $(\star)$ above, and for this we pick $\eta \in (0;1)$. 
Let $\Omega_\infty$ be a compact neighbourhood of the identity in $G_\infty$ such that $\mu\bigl(\pi_\Gamma(\Omega_\infty)\bigr)\geqslant 1-{\eta\over2}$.

It follows from Bruhat-Tits theory, more precisely from the geometric interpretation of the Cartan decomposition \cite[3.3]{TitsCorvallis79}, that for each $\gamma \in \Gamma[{1 \over p}]$ the denominator ${\bf d}(\gamma)$ is comparable to the distance from the origin $K_p$ to the vertex $\gamma K_p$ in the affine building of $G_p$, and the latter distance is comparable to the norm of the corresponding vectorial distance $\sigma(K_p,\gamma K_p)$. 
Therefore, in view of the expression \eqref{eq:1:1} of the cardinality of vectorial balls,
Theorem \ref{thm:3:1}\eqref{en:2:1} implies that there is a constant $r>0$ such that the kernel $k=k_{M,N}$ is supported on elements
with denominators $\leqslant p^{rMN}$ for $M$ and $N$ sufficiently large as in the statement of the latter theorem.

We use now the sequence $(B_N : N\in\NN^*)$ of neighbourhoods of the identity element in $G_\infty$, the partition $\mathcal{P}$ of $X$ and the integer $c \in\NN^*$ given by Lemma \ref{lemma-partition} for these choices of $r>0$ and of $\Omega_\infty$. 
Given a union $E=\bigsqcup_{j=1}^dE_j$ of $d$ elements of the refined partition $\mathcal{P}^{\vee_acN}$ such that $\mu(E) \geqslant \eta$, we must show that $d \geqslant e^{\delta N}$ for some suitable $\delta >0$. 

Set ${E}' = {E} \cap \pi_\Gamma(\Omega_\infty)$ and $E_j' = E_j \cap \pi_\Gamma(\Omega_\infty)$ for $1 \leqslant j \leqslant d$. 
Since $\mu(E) \geqslant \eta$ and $\mu \bigl( \pi_\Gamma(\Omega_\infty) \bigr) \geqslant 1 - {\eta \over 2}$, we have $\mu(E') \geqslant {\eta \over 2}$. 
By weak-$\star$ convergence, there is an index $i_0$ such that $c \mu_{i_0}(E')  \geqslant {\eta \over 2}$. 
We are interested in the correlation
\begin{equation}\label{Correlation}
\langle (\varphi_{i_0} \ind{E'}) * k , \varphi_{i_0} \ind{E'} \rangle,
\end{equation}
where $k$ is a wave kernel provided by Theorem \ref{thm:3:1}
with $z_0$ corresponding to $\varphi_{i_0}$ and $M > {2c \over \eta} = \varepsilon^{-1}$.
It follows from the construction of $k$ that the expression \eqref{Correlation} is real
and we shall see that it is actually positive.

\vspace{1ex}
\noindent{\it Estimating the correlation \eqref{Correlation} from above}.
Consider first the inner products $\langle(\varphi_{i_0}\ind{E_j'})*k,\varphi_{i_0}\ind{E_{\ell}'}\rangle$ with
$1\leqslant j, \ell \leqslant d$.
By Lemma \ref{lemma-partition}\eqref{en:4:1}, for any $1 \leqslant j \leqslant d$, there exists $x_j \in G_\infty$ such that the inclusion $E_j' \subseteq \pi_\Gamma(x_j B_N)$ holds $\mu$-essentially. 
Observe that the proof of \cite[Lemma 4.7]{Shem-Tov2022}, stated for special linear groups, remains valid for the groups we consider (it consists in decomposing double classes modulo $K_p$ into right cosets modulo $K_p$ and in applying the Cauchy--Schwarz inequality).
This lemma yields
\begin{align*}
    \bigl|\langle(\varphi_{i_0}\ind{E_j'})*k,\varphi_{i_0}\ind{E_\ell'}\rangle\bigr|
    &\leqslant
    \langle (|\varphi_{i_0}| \ind{E_j' \cap\pi_\Gamma(x_jB_N)})*|k|,|\varphi_{i_0}|
    \ind{E_{\ell}'\cap\pi_\Gamma(x_\ell B_N)} \rangle \\
    &\leqslant 
    v(k)\cdot\|k\|_\infty\cdot\|\varphi_{i_0}\|_{L^2(E_j')} \cdot \|\varphi_{i_0}\|_{L^2(E_{\ell}')},
\end{align*}
where $v(k)$ is the number of classes $bK_p$ with $b\in\Gamma[{1\over p}]$ such that $\bigl(E_j'\cap\pi_\Gamma(x_jB_N)\bigr) b\cap\bigl(E_{\ell}'\cap\pi_\Gamma(x_\ell B_N)\bigr)\neq\varnothing$ and $K_p b K_p \subseteq {\rm supp}(k)$.
By Theorem \ref{thm:3:1}\eqref{en:2:2}, there exists $\delta>0$ such that $\|k\|_\infty \lesssim e^{-\delta N}$ and, by Lemma \ref{lemma-partition}\eqref{en:4:2}, we know that $v(k) \lesssim N^{O_r(1)}$.
Hence,
\begin{equation*}
    \bigl|\langle(|\varphi_{i_0}|\ind{E_j'})*k,|\varphi_{i_0}|\ind{E_{\ell}'}\rangle\bigr|
    \lesssim
    N^{O_r(1)}e^{-\delta N}\|\varphi_{i_0}\|_{L^2(E_j')}\cdot\|\varphi_{i_0}\|_{L^2(E_{\ell}')}
\end{equation*}
for all $1\leqslant j,\ell\leqslant d$. By using the Cauchy--Schwarz inequality for finite sums, we get finally
\begin{align*}
\bigl|\langle(\varphi_{i_0}\ind{E'})*k,\varphi_{i_0}\ind{E'}\rangle\bigr| 
&\lesssim N^{O_r(1)}e^{-\delta N}\biggl(\sum_{j=1}^d\|\varphi_{i_0}\|_{L^2(E_j')}\biggr)^2\\
&\leqslant N^{O_r(1)}e^{-\delta N}d\sum_{j=1}^d\|\varphi_{i_0}\|^2_{L^2(E_j')}
\leqslant N^{O_r(1)}e^{-\delta N}d.
\end{align*}

\noindent
{\it Estimating the correlation \eqref{Correlation} from below}. 
In order to estimate $\langle(\varphi_{i_0}\ind{E'})*k,\varphi_{i_0}\ind{E'}\rangle$ from below, we use the orthogonal decomposition
\begin{equation}\label{OrthogonalDecomposition}
L^2(X)=\CC\varphi_{i_0}\oplus\varphi_{i_0}^\perp,
\end{equation}
so that $\varphi_{i_0}\ind{E'}= \mu_0\varphi_{i_0}+R$, with
\[
\mu_0=\langle \varphi_{i_0}\ind{E'},\varphi_{i_0}\rangle=\int_{E'}|\varphi_{i_0}|^2=\| \vphi_{i_0} \ind{E'}\|^2.
\]
Notice that $\mu_0\geqslant{\eta\over2c}=\varepsilon$ by the initital choice of $\Omega_\infty$.
By Pythagorus, we can estimate the second term (which is a sum of other Hecke eigenfunctions) as follows:
\begin{equation*}
\|R\|^2
=\|\varphi_{i_0}\ind{E'}\|^2-\mu_0^2
\leqslant\mu_0(1-\varepsilon).
\end{equation*}
As the convolution on the right by $k$ preserves the orthogonal decomposition \eqref{OrthogonalDecomposition}, we have
\begin{equation*}
\langle(\varphi_{i_0}\ind{E'})*k,\varphi_{i_0}\ind{E'}\rangle 
=\mu_0^2\langle\varphi_{i_0}*k,\varphi_{i_0}\rangle+\langle R*k,R\rangle.
\end{equation*}
As $\langle\varphi_{i_0}*k,\varphi_{i_0}\rangle\geqslant M>\varepsilon^{-1}$
and $\langle R*k,R\rangle\geqslant-\|R\|^2$,
according to Theorem \ref{thm:3:1}\eqref{en:2:3},
we deduce that
\begin{equation*}
\langle (\varphi_{i_0}\ind{E'})*k,\varphi_{i_0}\ind{E'}\rangle 
\geqslant{\mu_0^2\over\varepsilon}-\mu_0(1-\varepsilon)
=\mu_0\Bigl({\mu_0\over\varepsilon}-1+\varepsilon\Bigr) 
\geqslant\varepsilon^2.
\end{equation*}

\vspace{1ex}\noindent
{\it Conclusion}. By comparing the upper and lower bounds of the correlation \eqref{Correlation}, we finally obtain
\[
N^{O_r(1)}e^{-\delta N}d\gtrsim\varepsilon^2,
\quad\text{\emph{i.e.}},\quad
d\gtrsim\frac{\varepsilon^2e^{\delta N}}{N^{O_r(1)}},
\]
which allows us to apply the positive entropy criterion $(\star)$. 
\end{proof}

\appendix
\section{Hecke algebra eigenvalues and the spectrum $\Sigma$}
\label{app - 1}
In this appendix we retain the notation introduced in Section \ref{s - applications}.
Suppose that $f \in L^2(\Gamma \backslash G_\infty)$ is a joint eigenfunction of the Hecke algebra, that is, there is 
$\psi: P^+ \rightarrow \CC$ such that, for all $\lambda \in P^+$,
\[
	f * A_\lambda = \psi(\lambda) f.
\]
We may assume that $\|f\|_{L^2} = 1$. 
Notice that $\psi$ is a homomorphism from $\calL(G_{\frakp}, K_{\frakp})$ into $\CC$ thus,
in view of \cite[Propositions 1.2.2 \& 1.2.3]{macdo0}, there is $z \in \mathfrak{a}_\CC$ such that
\[
	\psi(\lambda) = P_\lambda(z).
\]
We claim that $P_\lambda(z)$ is positive definite as a function of $\lambda \in P^+$. 

Since both groups $G_\infty$ and $G_{\frakp}$ are of type I, see \emph{e.g.} \cite[Theorem 6.E.19]{BekkaDelaharpe2020}, 
we can use direct integral decompositions. Hence,
\[
	L^2\Big(\Gamma\big[\tfrac{1}{\frakp}\big] \backslash G_\infty \times G_{\frakp} \Big)
	=
	\int^\oplus_{\widehat{G_\infty \times G_{\frakp}}} \calH_\pi \: \mu_{\Gamma[\frac{1}{\frakp}]}({\rm d} \pi).
\]
In particular,
\begin{align*}
	P_\lambda(z) &= \sprod{f * A_\lambda}{f} \\
	&= \int_{\widehat{G_{\infty} \times G_\frakp}}
	\Tr \Bigl(  \pi(f * A_\lambda) \pi(f)^\star\Bigr) \: \mu_{\Gamma[\frac{1}{\frakp}]} ({\rm d} \pi).
\end{align*}
In the formulae above, $\widehat{G_\infty\times G_\frakp}=\widehat{G_\infty}\times\widehat{G_\frakp}$ denotes the unitary dual. To compute the trace, consider an irreducible representation $\pi=\pi_\infty\times\pi_\frakp$ of $G_\infty\times G_\frakp$ on $\calH_\pi=\calH_{\pi_\infty}\otimes\calH_{\pi_\frakp}$. Let us show that $\Tr\bigl(\pi(f*A_\lambda)\pi(f)^\star\bigr)$ vanishes unless $\pi_\frakp$ has nonzero $K_\frakp$-invariant vectors, in which case $\Tr\bigl(\pi(f*A_\lambda)\pi(f)^\star\bigr)$ boils down to a positive-definite Macdonald function. First of all, we have
\begin{align*}
\pi(f * A_\lambda) 
&= \int_{G_\infty \times G_{\frakp}} (f * A_\lambda)(g, g_{\frakp}) \pi(g, g_{\frakp}) 
{\: \rm d}(g, g_{\frakp}) \\
&= \int_{G_\infty \times G_{\frakp}} \int_{G_{\frakp}} f(g, g_\frakp h_\frakp) 
A_\lambda(h_\frakp^{-1}) \pi(g, g_\frakp) 
{\: \rm d} h_\frakp {\: \rm d}(g, g_\frakp) \\
&= \int_{G_\frakp} A_{\lambda}(h_\frakp^{-1}) \int_{G_\infty \times G_\frakp} f(g, g_\frakp) 
\pi(g, g_\frakp) {\: \rm d}(g, g_\frakp) \pi(1, h_\frakp^{-1}) 
{\:\rm d}h_\frakp\\
&=\pi(f)\,{(\Id\otimes\Lambda)},
\end{align*}
where
\[
\Lambda
=\int_{G_\frakp}A_\lambda(h)\pi_\frakp(h){\:\rm d}h
=\fint_{K_\frakp \lambda K_\frakp} \pi_\frakp(h){\:\rm d} h.
\]
{O}bserve that, for every $k \in K_\frakp$,
\[
\pi_\frakp(k)\,\Lambda 
=\fint_{K_\frakp\lambda K_\frakp}\pi_\frakp(kh){\:\rm d}h=\Lambda
\]
and {similarly} $\Lambda=\Lambda\,\pi_\frakp(k)$.
Consequently $\Lambda$ preserves the subspace $\smash{\calH_{\pi_\frakp}^{K_\frakp}}$ of $K_{\frakp}$-fixed vectors in $\calH_{\pi_\frakp}$, and vanishes on the orthogonal subspace.
Hence, $\Lambda=0$ if $\calH_{\pi_\frakp}^{K_\frakp}=\{0\}$.
Otherwise $\calH_{\pi_\frakp}^{K_\frakp}$ is one-dimensional (see \cite[Theorem 1.4.4(ii)]{macdo0}), \emph{i.e.}, $\calH_{\pi_\frakp}^{K_\frakp}=\CC\xi_{\pi_\frakp}$ with $\bigl\|\xi_{\pi_\frakp}\bigr\|=1$.
Moreover, there is $z_{\pi_\frakp} \in \mathfrak{a}_\CC$ such that
\[
	\left\langle \pi_\frakp(h) \xi_{\pi_\frakp}, \xi_{{\pi_\frakp}} \right\rangle
	= P_{\tilde\lambda}(z_{\pi_\frakp}),
\]
for all $\tilde\lambda\in P^+$ and $h \in K_\frakp \tilde\lambda K_\frakp$,
and $P_{\tilde{\lambda}}(z_{\pi_\frakp})$ is positive-definite as a function of $\tilde{\lambda}$,
according to \cite[Proposition 1.4.2(i)]{macdo0}.
{We deduce that}
\[
	\Tr \Bigl( \pi(f * A_\lambda) \pi(f)^\star \Bigr)
    = \sprod{\Lambda \xi_{\pi_\frakp}}{\xi_{\pi_\frakp}} 
	= \fint_{K_\frakp \lambda K_\frakp} 
	\sprod{\pi_\frakp(h) \xi_{\pi_\frakp}}{\xi_{\pi_\frakp}} 
	{\: \rm d} h\\
	= P_\lambda(z_{\pi_\frakp}),
\]
{and we conclude that}
\[
	P_\lambda(z)	
	= \int_{\widehat{G_\infty}\times (\widehat{G_{\frakp}})_{K_\frakp}}
	P_\lambda(z_{\pi_\frakp}) 
	\: \mu_{\Gamma[\frac{1}{p}]} ({\rm d} (\pi_\infty, \pi_\frakp))
\]
is positive-definite by superposition.
Here $(\widehat{G_\frakp})_{K_\frakp}$ denotes the spherical unitary dual, consisting in unitary irreducible representations of $G_\frakp$ with nonzero $K_\frakp$-fixed vectors.

\section*{Declarations}

As requested by the Journal submission system, on behalf of all authors, the corresponding author states that there is no conflict of interest. 
Moreover, data sharing is not applicable to this article as no datasets were generated or analysed during the current study.

\bigskip 

\begin{bibliography}{kernel}
	\bibliographystyle{amsplain}
\end{bibliography}

\end{document}